\crefname{claim}{Claim}{Claims}
\crefname{lemma}{Lemma}{Lemmas}
\crefname{theorem}{Theorem}{Theorems}
\crefname{proposition}{Proposition}{Propositions}
\crefname{definition}{Definition}{Definitions}
\crefname{conjecture}{Conjecture}{Conjectures}
\crefname{corr}{Corollary}{Corollaries}
\declaretheorem[name=Theorem, numberwithin=section]{theorem}
\declaretheorem[name=Lemma, sibling=theorem]{lemma}
\declaretheorem[name=Problem, sibling=theorem]{problem}
\declaretheorem[name=Corollary, sibling=theorem]{corollary}
\declaretheorem[name=Remark, sibling=theorem]{remark}
\declaretheorem[name=Claim]{claim}
\def\cqedsymbol{\ifmmode$\lrcorner$\else{\unskip\nobreak\hfil
\penalty50\hskip1em\null\nobreak\hfil$\lrcorner$
\parfillskip=0pt\finalhyphendemerits=0\endgraf}\fi}
\newenvironment{claimproof}[1][\proofname]
{%
	\proof[#1]%
}
{%
	\endproof%
}
\let\leq\leqslant
\let\geq\geqslant
\let\OLDthebibliography\thebibliography
\renewcommand\thebibliography[1]{
  \OLDthebibliography{#1}
  \setlength{\parskip}{0pt}
  \setlength{\itemsep}{0pt plus 0.3ex}
}%
   \def\MR#1{}
\newcommand{\maxCircle}{98}
\newcommand{\chio}{\chi_{\mathrm{o}}}
\newcommand{\chiio}{\chi_{\mathrm{io}}}
\newcommand{\chiso}{\chi_{\mathrm{so}}}
\newcommand{\chiiso}{\chi_{\mathrm{iso}}}
\newcommand{\chipcf}{\chi_{\mathrm{pcf}}}
\newcommand{\chicf}{\chi_{\mathrm{cf}}}
\newcommand{\GF}{\textsf{GF}}
\title{Odd coloring graphs with linear neighborhood complexity}
\author{James Davies}
\affil{Faculty of Mathematics and Computer Science, Leipzig University, Germany.}
\author{Meike Hatzel\thanks{Partly supported by the Institute for Basic Science (IBS-R029-C1).}}
\affil{Department of Mathematics, Technical University of Darmstadt, Germany.}
\author{Kolja Knauer\thanks{Supported through grant 
 PID2022-137283NB-C22 funded by MICIU/AEI/10.13039/501100011033 and by ERDF/EU and through the Severo Ochoa and María de Maeztu Program for Centers and Units of Excellence in R\&D (CEX2020-001084-M).}}
\affil{Departament de Matem\'{a}tiques i Inform\'{a}tica, Universitat de Barcelona, Centre de Recerca Matemàtica, Barcelona, Spain.}
\author{Rose McCarty\thanks{Supported by the National Science Foundation under Grant No.~DMS-2202961 and~DMS-2452111.}}
\affil{School of Mathematics and School of Computer Science, Georgia Institute of Technology, USA.}
\author{Torsten Ueckerdt\thanks{Supported by the Deutsche Forschungsgemeinschaft (DFG, German Research Foundation) -- 520723789.}}
\affil{Institute of Theoretical Informatics, Karlsruhe Institute of Technology, Germany.}
\begin{document}

\maketitle

\begin{abstract}
    We prove that any class of graphs with linear neighborhood complexity has bounded improper odd chromatic number. As a result, if $\mathcal{G}$ is the class of all circle graphs, or if $\mathcal{G}$ is any class with bounded twin-width, bounded merge-width, or a forbidden vertex-minor, then $\mathcal{G}$ is $\chio$-bounded.
    %Our proofs rely on a previously unexplored connection with Haussler's Shallow Packing Lemma.
    %\RMM{I updated the first sentence.}
\end{abstract}

%INTRODUCTION
\section{Introduction}

The \emph{odd chromatic number} $\chio(G)$ of a graph $G$ is the minimum number of colors needed to properly color $G$ so that for each non-isolated vertex $v$ of $G$, there exists a color which appears on an odd number of vertices in its open neighborhood $N(v)$. This parameter\footnote{Note that in the literature there exists another parameter called \emph{odd chromatic number}, see e.g.~\cite{BHKM23}.} was introduced by Petru{\v{s}}evski and {\v{S}}krekovski~\cite{PS22}. Among the main problems and results concerning this parameter is the conjecture of~\cite{PS22} that all planar graphs have odd chromatic number at most $5$, where $8$ is the best current upper bound~\cite{PP23}. Related classes that have been studied are $1$-planar graphs~\cite{LWY23}, IC-planar graphs~\cite{PWL23}, toroidal graphs~\cite{TY23}, graphs of bounded thickness~\cite{Kit24}, and graphs of bounded maximum average degree~\cite{CCKP23,Cra24,WY24}. Another interesting conjecture in the area, due to Caro, Petru{\v{s}}evski and {\v{S}}krekovski~\cite{CPS22}, states that the odd chromatic number of a graph is at most its maximum degree plus a constant. The current best results can be found in~\cite{DOP24}. 

While Cranston~\cite{Cra24} constructed graphs with maximum average degree at most 4 and unbounded odd chromatic number, there are other more restrictive notions of sparsity that do bound the odd chromatic number. In particular, classes of bounded expansion have bounded odd chromatic number; indeed, they even have bounded proper conflict free-chromatic number~\cite{Hic23,Liu24} and bounded strong odd chromatic number~\cite{Pil25}. See the end of this section for the relevant definitions.

A convenient relaxed notion is the \emph{improper odd chromatic number} $\chiio(G)$ of a graph $G$. It is the minimum number of colors needed to (possibly improperly) color the vertices of $G$ so that for each non-isolated vertex $v$ of $G$, there exists a color which appears on an odd number of vertices in its open neighborhood $N(v)$. Since a product coloring of an improper odd coloring and a proper coloring yields a proper odd coloring, we obtain the following. We denote the chromatic number of a graph $G$ by $\chi(G)$.

\begin{remark}\label{rem:product}
    For every graph $G$ we have $\chio(G)\leq\chi(G)\chiio(G)$.    
\end{remark}

In the current paper, we prove that any graph of linear neighborhood complexity has bounded improper odd chromatic number, which then yields bounds on the odd chromatic number once the chromatic number is bounded. There are many examples of graphs with linear neighborhood complexity. In particular, classes of graphs with good model-theoretic properties often have linear neighborhood complexity; see, for example, \cite{mergewidthChi, complexityTWW, flipperGamesMonStable, RVS19}. The \emph{neighborhood complexity} of a graph $G$, denoted by $\eta_G(m)$, is the maximum over all $m$-element subsets $A \subseteq V(G)$ of $|\{N(v) \cap A: v \in V(G)\}|$. Thus, the neighborhood complexity counts the number of different neighborhoods that can occur within an $m$-vertex set. 

\begin{restatable}{theorem}{nghComplex}
\label{thm:main}
    For any integer $r$, there exists an integer $k = k(r)$ so that if $G$ is a graph with neighborhood complexity $\eta_G(m) \leq r \cdot m$ for every positive $m$, then $G$ can be (improperly) $k$-colored such that for each non-isolated vertex $v$, there exists a color which appears an odd number of times in its neighborhood $N(v)$.
\end{restatable}
\noindent The main ingredient in our proof of \cref{thm:main} is Haussler's Shallow Packing Lemma~\cite{Haussler95}.

We obtain the following corollary of \cref{thm:main} using \cref{rem:product} and, in the case of vertex-minors, a technique from~\cite{jiménez2024boundednessproperconflictfreeodd}. 
%technique from~\cite{jiménez2024boundednessproperconflictfreeodd} to reduce to the case of bipartite graphs. 
A graph class is \emph{$\chio$-bounded}\footnote{Note that in~\cite{jiménez2024boundednessproperconflictfreeodd} this was defined via the existence of $f$ such that  $\chio(G)\leq f(\chi(G))$ for all $G$ in the class. Since the classes we consider are $\chi$-bounded, both definitions coincide for the results in the present paper.} if there exists a function $f$ so that any graph in the class with clique number $\omega$ has odd chromatic number at most~$f(\omega)$. 

\begin{restatable}{corollary}{corWidthEtc}
\label{cor:NeighborhoodComplOddColoring}
    If $\mathcal{G}$ is any class of graphs with bounded merge-width or with a forbidden vertex-minor, then $\mathcal{G}$ is $\chio$-bounded. 
\end{restatable}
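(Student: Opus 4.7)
The plan is to combine \cref{thm:main} with a product-coloring reduction from general graphs to induced bipartite subgraphs, using that each of the three classes in question is $\chi$-bounded and has linear neighborhood complexity; the reduction follows an idea of~\cite{jiménez2024boundednessproperconflictfreeodd}. First, I would collect from the literature, for each of the three class types, (a) a $\chi$-bounding function $f$ with $\chi(G)\le f(\omega(G))$, and (b) a constant $r$ with $\eta_G(m)\le r\cdot m$. For bounded twin-width and bounded merge-width, the linear neighborhood complexity bound is given by the references already cited in the introduction; for classes with a forbidden vertex-minor, the analogous bound should be drawn from the literature on vertex-minor closed classes. Since all three types are closed under induced subgraphs, both properties pass to induced bipartite subgraphs.

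Given $G\in\mathcal{G}$ with $\omega(G)=\omega$, I would fix a proper coloring $\psi\colon V(G)\to\{1,\dots,f(\omega)\}$ with classes $V_1,\dots,V_{f(\omega)}$. For each pair $\{i,j\}$ with $i\ne j$, the induced subgraph $G[V_i\cup V_j]$ is bipartite, lies in $\mathcal{G}$, and inherits linear neighborhood complexity, so \cref{thm:main} supplies an odd coloring $\phi_{\{i,j\}}\colon V_i\cup V_j\to\{1,\dots,k\}$ for some $k=k(r)$. Define $\Phi(v):=\bigl(\psi(v),\,(\phi_{\{\psi(v),j\}}(v))_{j\ne\psi(v)}\bigr)$ for each $v\in V(G)$, using at most $f(\omega)\cdot k^{f(\omega)-1}$ colors. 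Adjacent vertices lie in distinct classes and so differ in the first coordinate of $\Phi$, making $\Phi$ proper.

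For the odd condition at a non-isolated $v\in V_i$, I would choose some $j^*$ with $N(v)\cap V_{j^*}\ne\emptyset$. The odd coloring property of $\phi_{\{i,j^*\}}$ at $v$ yields a color $c^*$ appearing an odd number of times on $N(v)\cap V_{j^*}$. Every such vertex has $c^*$ in the $i$-th entry of its $\Phi$-value, so partitioning this odd-size set by the remaining coordinates of $\Phi$ produces some full $\Phi$-tuple that occurs an odd number of times; since vertices outside $V_{j^*}$ have a different first coordinate, this tuple occurs an odd number of times on all of $N(v)$. I expect the main obstacle to be the first step, specifically pinning down the linear neighborhood complexity statement for vertex-minor-closed classes; once the relevant bounds are in hand, the product-coloring argument assembles cleanly.
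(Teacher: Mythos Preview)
Your overall architecture matches the paper's: pass to induced bipartite subgraphs using $\chi$-boundedness, then apply \cref{thm:main}. The paper cites \cite[Lemma~1.3]{jiménez2024boundednessproperconflictfreeodd} (stated as \cref{lem:reducesToBip}) for the reduction, giving $\chi_{odd}(G)\le k^{2}\chi(G)$; your explicit product-coloring argument is also correct but yields the exponentially worse bound $f(\omega)\cdot k^{f(\omega)-1}$. One cosmetic point: your coordinate tuples $(\phi_{\{\psi(v),j\}}(v))_{j\neq\psi(v)}$ are indexed by sets that depend on $\psi(v)$, so to compare $\Phi$-values across color classes you should index over all $j$ and insert a dummy at position $\psi(v)$.

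The genuine gap is the one you flagged yourself: linear neighborhood complexity for classes with a forbidden vertex-minor is \emph{not} available as a direct citation; the paper has to establish it. The route is as follows. The $1$-subdivision of $K_t$ contains every $t$-vertex graph as a vertex-minor and is itself the fundamental graph of $K_{t+1}$ with respect to a spanning star; hence a class of bipartite graphs with a forbidden vertex-minor also excludes some such fundamental graph as a pivot-minor. Via the correspondence between pivot-minors of bipartite graphs and minors of binary matroids~\cite{RWAndVM}, the associated binary matroids then exclude $M(K_{t+1})$ and $M(K_{t+1})^*$, and the paper's \cref{lem:growthRateNeighborhoodCompl} (a consequence of the Matroid Growth Rate Theorem) delivers the linear neighborhood-complexity bound. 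Without this matroidal detour your sketch remains incomplete for the vertex-minor case.
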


\noindent We note that every class of bounded twin-width also has bounded merge-width~\cite[Theorem~1.4]{mergeWidthIntro}. So Corollary~\ref{cor:NeighborhoodComplOddColoring} implies that classes of bounded twin-width are $\chio$-bounded. The chromatic number of graphs with bounded twin-width has been well-studied; see~\cite{twIII, Bourneuf2025Bounded, PilipczukSokolowski23}. Also see~\cite{twinWidthIntro} for the original motivation of the parameter.

Our proof of \cref{cor:NeighborhoodComplOddColoring} uses the fact that classes with bounded merge-width~\cite{mergewidthChi} or a forbidden vertex-minor~\cite{Davies2022vm} are $\chi$-bounded. (A class of graphs is \emph{$\chi$-bounded} if there exists a function $f$ so that any graph $G$ in the class with clique number $\omega$ has $\chi(G)\leq f(\omega)$.)

We also use a connection between the Matroid Growth Rate Theorem~\cite{geelen2003cliques} and neighborhood complexity in order to obtain the following corollary of \cref{thm:main} for a class of bipartite graphs. This connection was recently observed by the authors~\cite{davies2025girthgfqrepresentablematroids}, who used it in order to find the unavoidable cosimple $\GF(q)$-representable matroids of large girth. %\RMM{Add citation.}\MH{added the citation}

\begin{corollary}
\label{cor:oddColoringMat}
    For any integers $t$ and $q$, there exists an integer $f(t,q)$ such that for any matroid $M$ without $U_{2,q+2}$, $U_{q,q+2}$, $M(K_t)$, or $M(K_t)^*$ as a minor and for any basis $B$ of $M$, the fundamental graph $\mathcal{F}(M,B)$ has odd chromatic number at most $f(t,q)$.
\end{corollary}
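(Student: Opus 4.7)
The plan is to prove that the bipartite graph $\mathcal{F}(M,B)$ has linear neighborhood complexity and then invoke \cref{thm:main}. Let $\mathcal{M}$ denote the class of matroids excluding $U_{2,q+2}$, $U_{q,q+2}$, $M(K_t)$, and $M(K_t)^*$ as minors. Since $U_{2,q+2}^* = U_{q,q+2}$ and $M(K_t)^{**} = M(K_t)$, this class is minor-closed and self-dual. The key external input, established via the Matroid Growth Rate Theorem of Geelen, Kung, and Whittle in \cite{davies2025girthgfqrepresentablematroids}, is that $\mathcal{M}$ has \emph{linear} growth rate: there exists $c = c(t,q)$ such that $|E(N)| \leq c \cdot r(N)$ for every simple matroid $N \in \mathcal{M}$.

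Fix $A \subseteq V(\mathcal{F}(M,B))$ with $|A| = m$, and write $A = A_1 \sqcup A_2$ where $A_1 \subseteq B$ and $A_2 \subseteq E(M) \setminus B$. Since $\mathcal{F}(M,B)$ is bipartite, for $v \in E(M) \setminus B$ we have $N(v) \cap A = C(v,B) \cap A_1$; and in the minor $N_1 := M/(B \setminus A_1) \in \mathcal{M}$, this set is exactly the fundamental circuit of $v$ with respect to the basis $A_1$, minus $\{v\}$ (with $v$ becoming a loop if the intersection is empty). Two elements of $E(M)\setminus B$ yield the same set if and only if they are parallel (or both loops) in $N_1$, so the number of distinct neighborhoods $N(v) \cap A$ with $v \in E(M) \setminus B$ is at most the number of elements of the simplification of $N_1$, which is bounded by $c \cdot |A_1|$. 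The symmetric argument in the dual, using the identity $\mathcal{F}(M,B) = \mathcal{F}(M^*, E(M) \setminus B)$ together with self-duality of $\mathcal{M}$ applied to the minor $M^*/((E(M)\setminus B) \setminus A_2)$, gives a bound of $c \cdot |A_2|$ for the number of distinct sets $N(v) \cap A$ with $v \in B$. Summing yields $\eta_{\mathcal{F}(M,B)}(m) \leq 2c \cdot m$, so \cref{thm:main} produces $\chi_{odd}(\mathcal{F}(M,B)) \leq k(2c(t,q))$, which we take as $f(t,q)$.

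The main obstacle is the linear growth rate claim itself. The Growth Rate Theorem in general only provides a trichotomy between linear, quadratic, and exponential growth, and we are relying on the prior work \cite{davies2025girthgfqrepresentablematroids} to verify that excluding $\{U_{2,q+2}, U_{q,q+2}\}$ suppresses the exponential projective-geometry regime while excluding $\{M(K_t), M(K_t)^*\}$ suppresses the quadratic graphic and cographic regimes, thereby forcing the linear case. Once that is in hand, the translation between fundamental graph neighborhoods and fundamental circuits in a contracted minor is essentially a repackaging of the correspondence introduced in that same reference, requiring only routine care with loops, parallel classes, and the separate treatment of the two sides of the bipartition.
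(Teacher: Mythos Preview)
Your approach is the same as the paper's: establish linear neighborhood complexity of $\mathcal{F}(M,B)$ via the Growth Rate Theorem applied to the minor obtained by contracting the unused basis elements, then invoke \cref{thm:main}; the paper packages this as \cref{lem:growthRateNeighborhoodCompl} and, like you, uses duality (\cref{lem:dualFundGraphs}) to handle the two sides of the bipartition symmetrically.

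Two small slips to fix. First, the ``if and only if'' about parallel elements is false in general matroids (in $U_{2,4}$ with basis $\{1,2\}$, the elements $3$ and $4$ have the same fundamental circuit $\{1,2\}$ but are not parallel); fortunately you only need the direction ``parallel $\Rightarrow$ same neighborhood'', which is true and already yields the desired inequality. Second, your final paragraph overcomplicates the growth-rate input: there is no ``cographic regime'' in the trichotomy, and the Geelen--Whittle theorem (stated in the paper as \cref{thm:growthRate}) directly gives linear growth from excluding $U_{2,q+2}$ and $M(K_t)$ alone---the excluded duals $U_{q,q+2}$ and $M(K_t)^*$ are needed only so that the same bound applies to $M^*$ in the second half of your argument.
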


An interesting common special case of \cref{cor:NeighborhoodComplOddColoring} and \Cref{cor:oddColoringMat} is that of bipartite circle graphs. A graph is a \emph{circle graph} if there exists a collection of chords on a circle so that the vertices are in bijection with the chords, and two vertices are adjacent if and only if their chords have non-empty intersection. Bouchet~\cite{circleGraphsVM} 
characterized circle graphs by their forbidden vertex-minors. Additionally, circle graphs with bounded clique number have bounded merge-width~\cite{mergeWidthIntro, circleGraphsTwinWidth}. We also note that there are several direct proofs that circle graphs are $\chi$-bounded; see~\cite{improvedCircleColoring, circlePolyChi, circleMultInterval, polygonalCircle}. Moreover, bipartite circle graphs are the fundamental graphs of planar graphs~\cite{deFraysseix81}. 
Using a technique from~\cite{jiménez2024boundednessproperconflictfreeodd}, we apply our results about bipartite circle graphs to prove that all circle graphs are $\chio$-bounded. This settles~\cite[Question 3.9]{jiménez2024boundednessproperconflictfreeodd}, which asked whether circle graphs are $\chio$-bounded. 

Based on the fact that bipartite circle graphs are the fundamental graphs of planar graphs, we also develop another strategy to prove  $\chio$-boundedness. In the case of circle graphs, this strategy gives a much better bound.

For this result, recall that the \emph{star-arboricity} of a graph $G$ is the smallest integer $\ell$ such that $E(G)$ can be decomposed into $\ell$ star forests (that is, forests where each component has diameter at most $2$). %Note that the $\mathrm{sa}(G)$ is bounded by twice the degeneracy of $G$, see~\cite{AA89}. Hence, in particular 
By the Kostochka–Thomason bound~\cite{Kos82,Kos86,Tho84, Tho01}, for any integer $t$, every graph with no $K_t$ minor has star-arboricity $O(t\sqrt{\log t})$. For each integer $t$, we write $\ell(t)$ for the largest star-arboricity of any graph with no $K_t$-minor. Given a graph $G$, a spanning forest $T$ of $G$, and an edge $f \in E(T)$, note that deleting $f$ from $T$ yields exactly two components. The \emph{fundamental cut} of $T$ with respect to $f$ is the set of all edges in $E(G)$ which join these two components.
We can then prove the following ``one-sided'' coloring result.

%\RMM{I think this would be better to write without defining $\ell$, but I don't want to mess with changing it, and I guess for circle graphs maybe you want more precise bounds. I would also not define the notation $C^*$ yet (or at all if possible...). Can someone fix the definition of $C^*(T,f)$ above? I guess you want to include $f$, which is weird to me.}\KK{I included $\ell$ exactly because we needed this theorem to be super precise, (actually after a comment of Meike), when using it on circle graphs. Don't know how to avoid this.}\KK{I moved the notion for fundamental cuts into the proof pof the Theorem, where it is relatively convenient}\RMM{Got it. Looks good, thanks!}

\begin{restatable}{theorem}{thmfundCuts}
\label{thm:fundCuts}
    For any integer $t$, there exists an integer $k \leq 16\ell(t-1)+1$ so that if $G$ is a graph with no $K_t$ minor and $T$ is a spanning forest of $G$, then there exists a coloring $\phi: E(G)\setminus E(T) \rightarrow \{1,\ldots,k\}$ so that for each non-bridge $f \in E(T)$, there exists a color $i \in \{1,\ldots,k\}$ which occurs an odd number of times on the fundamental cut of $T$ with respect to $f$.%$C^*(T,f)$. %Indeed, $k(t)$, with $\ell(\cdot)$ defined as above the theorem. 
\end{restatable}

\cref{thm:fundCuts} can be seen as a one-sided coloring result in the fundamental graph of the graphic matroid of $G$, which is a matroid $M$ without $U_{2,4}$, $F_7$, $F^*_7$, $M^*(K_5)$, $M^*(K_{3,3})$, or $M(K_t)$ as a minor, where  $F_7$ is the \emph{Fano matroid} and $K_{3,3}$ is the complete bipartite graph with sides of size $3$. We believe that Theorem~\ref{thm:fundCuts} is interesting on its own and propose a possible strengthening; see \cref{prob:primal}. 
As an application of \cref{thm:fundCuts}, we further improve the bound on the odd chromatic number of bipartite circle graphs to~{\maxCircle}. This is a more explicit bound, even if it is far from the lower bound; see~\cref{prob:circle}. 

\begin{restatable}{corollary}{colBipCircle}
\label{cor:coloringFundPlanar}
Every bipartite circle graph has odd chromatic number at most \maxCircle{}.
\end{restatable}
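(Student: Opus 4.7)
The plan is to construct an explicit coloring directly from a chord-diagram representation of $G$, in keeping with the paper's remark that the proof is independent of the neighborhood complexity framework. Fix a chord diagram $D$ representing $G$, and let $(A,B)$ be the bipartition of $V(G)$. I would aim for a coloring of the form $c(v) = (s(v), \ell(v))$, where $s(v) \in \{A,B\}$ records the bipartition side and $\ell(v)$ is an auxiliary label taking at most $49$ values, giving $2 \cdot 49 = 98$ colors in total. Since $s$ alone is already a proper $2$-coloring, properness of $c$ is automatic, and the entire challenge is to design $\ell$ so that the odd neighborhood condition is satisfied.

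To construct $\ell$, I would sweep the endpoints of the chords in circular order around the circle and, at each event, update a combinatorial state (such as a stack of currently active chords with their nesting depth, or a small-modulus position invariant). Each chord would then receive a label recording its local context at the moment its first endpoint is processed. Natural ingredients include nesting depth modulo a small integer, a parity of endpoint position, and a bounded summary of the stack at that time; each such ingredient contributes a bounded factor to the total number of labels.

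To verify the odd condition, I would analyse the structured way in which $N(v)$ crosses a fixed chord $v$. The neighbours of $v$ are precisely the chords crossing $v$, and they admit a canonical linear order along $v$, with their mutual nesting outside of $v$ constrained by the planarity of the chord diagram. Combined with the bipartite hypothesis, which forces all of $N(v)$ to lie in a single side of the bipartition, this should make the label distribution on $N(v)$ tractable, and I would argue by a parity or pigeonhole-style counting that at least one label value occurs an odd number of times in $N(v)$.

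The main obstacle is that the odd condition is a fragile, global parity constraint: a single labelling $\ell$ must certify the odd condition for every chord simultaneously, and naive sweep-based invariants typically break on some chords. Overcoming this will likely require taking $\ell$ to be a Cartesian product of two or three coarser invariants, each introduced to fix a particular failure mode, with the specific number $49$ emerging from the product. The step I expect to dominate the proof is a careful case analysis around chords whose neighbourhoods happen to be ``symmetric'' under the first invariants, forcing the introduction of one more refining coordinate.
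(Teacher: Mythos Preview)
Your proposal is not a proof but a speculative plan: there is no concrete definition of the labelling $\ell$, no verification that any particular label appears oddly in each neighbourhood, and the number $49$ is not derived but merely hoped for as a product of unspecified invariants. Phrases like ``should make the label distribution tractable'' and ``will likely require'' signal that the key step is missing, and indeed it is: nothing in the sketch explains why a sweep-based invariant (depth modulo a constant, endpoint parity, stack summary) would force an odd colour count in $N(v)$ for \emph{every} chord $v$. The odd condition is a genuine parity constraint on sets of unbounded size, and none of the invariants you list interacts with parity of multiplicities in any evident way.

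The paper's argument is structurally different and does not work with the chord diagram at all. It invokes de~Fraysseix's theorem that every connected bipartite circle graph is the fundamental graph $\mathcal{F}(G,T)$ of a planar graph $G$ with spanning tree $T$, and then applies \cref{thm:fundCuts} twice: once to $G$ (which is $K_5$-minor-free, so \cref{thm:fundCuts} with $t=5$ and the star-arboricity bound $\ell(4)\le 3$ gives $16\cdot 3 + 1 = 49$ colours on $E(G)\setminus E(T)$), and once to the planar dual $G^*$ with spanning forest $T^* = E(G)\setminus E(T)$ (another $49$ colours on $E(T)$), using that $\mathcal{F}(G,T)\cong\mathcal{F}(G^*,T^*)$ by planar matroid duality and \cref{lem:dualFundGraphs}. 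The bound $98 = 49+49$ thus comes from star arboricity of $K_4$-minor-free graphs together with the laminar-family colouring of \cref{lem:one-fundCut}, not from any sweep or nesting-depth invariant on chords. If you want to pursue your direct approach, the essential missing ingredient is a structural lemma guaranteeing an odd colour class in every crossing set; nothing short of that will close the gap.
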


Recall that our proof that circle graphs are $\chio$-bounded reduces to the case of bipartite circle graphs using the technique from~\cite{jiménez2024boundednessproperconflictfreeodd}. We note that this type of approach is necessary in the following sense.

\begin{restatable}{proposition}{propImpro}
\label{prop:Impro}
There are circle graphs of arbitrarily high improper odd chromatic number.
\end{restatable}

%\noindent We obtain this result as a corollary of the more general \cref{thm:fundCuts}, which can be seen as a ``one-sided'' odd-coloring result for the fundamental graph of a graphic matroid. We believe that \cref{thm:fundCuts} is interesting on its own and propose a possible strengthening; see \cref{prob:primal}. 

%(Bipartite circle graphs are exactly the fundamental graphs of planar graphs~\cite{deFraysseix81}.)

\subsection*{Related results and parameters} We now give a brief survey of how various versions of the chromatic number are related. 

\paragraph{Proper coloring parameters.}
The odd chromatic number was introduced as a relaxation of the \emph{proper conflict-free chromatic number} $\chipcf(G)$. Here for each non-isolated vertex $v$ of $G$, there must exist a color which appears on exactly one vertex in its open neighborhood $N(v)$. This parameter was introduced by Fabrici, Lu\v{z}ar, Rindo\v{s}ov\'a, and Sot\'ak~\cite{FLRS23}.
Both parameters can be strengthened by requiring that \emph{every} color in the open neighborhood of $v$ appears an odd number of times or exactly once, respectively. This leads to the \emph{strong odd chromatic number} $\chiso(G)$ introduced in~\cite{KP24}, and the \emph{squared chromatic numer}, i.e. the chromatic number $\chi(G^2)$ of $G^2$ the \emph{square} of $G$, where different vertices of $G^2$ are connected if their distance in $G$ is at most $2$, see~\cite{Cra23} for a recent account. If $\Delta(G)$ denotes the maximum degree of $G$, then from the above definitions, we immediately get the following inequalities.\begin{equation}
    \omega(G)\leq \chi(G)\leq \chio(G)\leq \chiso(G), \chipcf(G)\leq \chi(G^2)\leq \Delta(G)^2+1.
\end{equation}\label{prop}
The rightmost inequality in \eqref{prop} can also be reversed in some sense, since we trivially have $\chi(G^{2})\geq \Delta(G)+1$ for all graphs $G$.
However, all remaining inequalities in \eqref{prop} can be arbitrarily far apart:
Petru\v{s}evski and \v{S}krekovski~\cite{PS22} show that there is no function $f$ such that $\chio(G)\leq f(\chi(G))$ for all $G$.
It is also known~\cite{jiménez2024boundednessproperconflictfreeodd} that there is no function $f$ such that $\chipcf(G)\leq f(\chio(G))$ for all $G$.
Further, since $\Delta(G) < \chi(G^2)$, any graph class with unbounded degree but bounded $\chipcf$ shows that there is no function $f$ such that $\chi(G^2)\leq f(\chipcf(G))$ for all $G$, e.g. complete bipartite graphs. Kwon and Park~\cite{KP24} note that there is no function $f$ such that $\chi(G^2)\leq f(\chiso(G))$ for all $G$.
Also, there is no function $f$ such that $\chiso(G)\leq f(\chio(G))$ for all graphs~$G$, see~\cite{soon}. Just for completeness, let $G$ be obtained from $K_n$ by subdividing each edge once and then attaching one vertex incident to all others. Then $\chipcf(G)\leq 4$ but $\chiso(G)\geq n$. On the other hand, if $G$ is the bipartite graph with bipartition $(X,Y)$ where ${{|X|-1}\choose 2}$ is odd and for every $A\subseteq X$ with $|A|=3$ there is exactly one $v\in Y$ with $N(v)=A$, then $\chiso(G)=2$ but $\chipcf(G)\geq\frac{|X|}{2}$. So $\chiso$ and $\chipcf$ are not tied together by any function. 

Hence, one can wonder if for a class of graphs there is a function that upper bounds one of the above coloring parameters in terms of the clique number, yielding various notions of \emph{boundedness}. As mentioned in the beginning of the introduction, classes of bounded expansion are $\chipcf$-bounded~\cite{Hic23,Liu24} as well as $\chiso$-bounded~\cite{Pil25}. 

To see the relevance of our results, note that circle graphs are neither $\chipcf$-bounded~\cite{jiménez2024boundednessproperconflictfreeodd} nor $\chiso$-bounded~\cite{soon}. Hence they highlight the difference of $\chio$ when compared to $\chipcf$ and $\chiso$. Further, we can see that also the classes we consider behave different than other closely related classes. Namely, while the class of permutation graphs, a subclass of circle graphs, is $\chipcf$-bounded, several other superclasses of permutation graphs such as convex bipartite graphs, grid-intersection graph, and comparability graphs of 3-dimensional posets are not $\chio$-bounded, see~\cite{jiménez2024boundednessproperconflictfreeodd}.

\paragraph{Improper colorings.}
Each of the above coloring parameters can be relaxed by dropping the requirement of the coloring being proper. The proper conflict-free colorings were actually motivated by (improper) \emph{conflict-free chromatic number} $\chicf$ introduced by Pach and Tardos~\cite{PT09}. This came naturally from conflict-free colorings of hypergraphs studied by Even, Lotker, Ron, and Smorodinsky~\cite{ELRS03} in relation to the frequency assignment problem in cellular networks. The latter were later relaxed by Cheilaris, Keszegh, and Pálvölgyi to odd colorings for hypergraphs in~\cite{CKP13}. From this point of view the {improper odd chromatic number} $\chiio(G)$ arises naturally and has been considered in~\cite{CZ24, ZML25}. The {improper strong odd chromatic number} $\chiiso(G)$ was introduced in~\cite{soon}. The improper version of the squared chromatic number  is the chromatic number $\chi(G^{\sharp 2})$ of the \emph{exact square} $G^{\sharp 2}$ of $G$, where vertices of $G^{\sharp 2}$ are connected if their distance in $G$ is $2$, see~\cite{FHMNNSV21}. The improper variant of a parameter is always a lower bound for the proper variant. As mentioned in \cref{rem:product} a product coloring yields that $\chio(G)\leq\chiio(G)\chi(G)$. Similarly one gets $\chipcf(G)\leq\chicf(G)\chi(G)$ and $\chi(G^2)\leq \chi(G^{\sharp 2})\chi(G)$. However,  there exists no function $f$ such that that $\chiso(G)\leq f(\chiiso(G),\chi(G))$ for all $G$, see~\cite{soon}. %In the proof of \cref{thm:main} we will use that if 

\subsection*{Outline}
In \cref{sec:prelim}, we introduce Haussler's Shallow Packing Lemma and some preliminaries about matroids. We prove \cref{thm:main} in \cref{sec:oddChiFund}. In \cref{sec:applications}, we prove the two corollaries of \cref{thm:main} (\cref{cor:NeighborhoodComplOddColoring} and \cref{cor:oddColoringMat}). Finally, in \cref{sec:bipartiteCircle}, we use a different technique (\cref{thm:fundCuts}) to find a concrete bound on the odd chromatic number of bipartite circle graphs (\cref{cor:coloringFundPlanar}). We also prove Proposition~\ref{prop:Impro} and propose a couple of open problems.

%PRELIMINARIES
\section{Preliminaries}
\label{sec:prelim}
We introduce some basic notions of matroids and graphs, see~\cite{Oxl11,Wel76} and~\cite{Die25} for books on these topics.

A \emph{matroid} is a pair $M=(E,\mathcal{B})$ of a finite \emph{ground set} $E$ and a collection of bases $\mathcal{B}$. These are subsets of $E$ such that for all $B, B'\in \mathcal{B}$ and $e\in B\setminus B'$ there exists a $e'\in B'\setminus B$ such that $B\setminus\{e\}\cup\{e'\}\in\mathcal{B}$. The \emph{dual matroid}  of $M$ is $M^*=(E,\{E\setminus B\mid B\in \mathcal{B}\})$. A subset $C\subseteq E$ is a \emph{circuit} of $M$ is it is contained in no basis of $M$, and it is a \emph{cocircuit} of $M$ is it is a circuit of $M^*$. The \emph{girth} and \emph{cogirth} of a matroid are the size of its smallest circuit and cocircuit, respectively. If the girth and corgirth is larger than two, we say that $M$ is \emph{simple} and \emph{cosimple}, respectively. For an element $e \in E$, we write $M\setminus e$ for the matroid obtained from $M$ by deleting $e$, i.e., its bases are the largest independent sets of $M$ in $E\setminus \{e\}$. Furthermore, denote by $M/e$ the matroid obtained from $M$ by contracting $e$, i.e, $(M^*\setminus e)^*$. If $M'$ can be obtained from $M$ by a sequence of contractions and deletions, we say that $M'$ is a \emph{minor} of $M$. We write $U_{2,q}$ for the $q$-element line, and, more generally, $U_{t,q}$ for the uniform matroid with $q$ elements and rank $t$. That is, $U_{t,q}$ is the $q$-element matroid where the circuits are the sets of size $t+1$. We say that $M$ is \emph{$\GF(q)$-representable} if there exists a $\GF(q)$-vector space whose bases correspond to $M$. IF $q=2$, then $M$ is \emph{binary}.

Given a graph $G$, the bases of the \emph{graphic matroid} $M(G)$ of $G$ are the edge sets of spanning forests; a \emph{spanning forest} is a maximal acyclic subgraph of $G$.
Given a graph $G$ and a vertex $v$, we write $N_G(v)$ (or just $N(v)$ when the graph is clear from the context) for the \emph{open} neighborhood of $v$. That is, we do not consider $v$ to be in its neighborhood $N(v)$. For a graph $G$ and a vertex $v \in V(G)$, we write $G-v$ for the graph obtained from $G$ by deleting~$v$. We write $K_t$ for the complete graph with $t$ vertices.

\subsection*{Haussler's Shallow Packing Lemma}
Let $\mathcal{F}$ be a collection of subsets of a finite ground set $V$. We always consider $\mathcal{F}$ as a subset of $2^V$; we do not allow multisets. The \emph{shatter function} of $\mathcal{F}$, denoted by $\pi_{\mathcal{F}}(m)$, is the maximum size of $\mathcal{F}$ when restricted to any $m$ elements in $V$. That is, $\pi_{\mathcal{F}}(m)$ is the maximum, over all $m$-element subsets $W\subseteq V$, of the number of equivalence classes of the relationship~$\sim_{W}$ on $\mathcal{F}$ where two elements $F,F' \in \mathcal{F}$ satisfy $F\sim_{W}F'$ if $F \cap W = F'\cap W$. For a positive integer~$\delta$, we say that two sets $F,F' \in \mathcal{F}$ are \emph{$\delta$-separated} if their symmetric difference $F \Delta F'$ has size at least $\delta$ (that is, there are at least $\delta$ elements in $V$ which are in one of $F,F'$ but not the other). We say that $\mathcal{F}$ is \emph{$\delta$-separated} if any pair of distinct elements in $\mathcal{F}$ are $\delta$-separated.

The following version of Haussler's Shallow Packing Lemma from~\cite{Haussler95} is stated as~\cite[Lemma~2.2]{FPS19}, for instance.  The original statement contains a parameter $d$, which for our purposes can be substituted by $1$, yielding the following.%We just require the case that $d=1$.We just require the case that $d=1$.

\begin{lemma}[\cite{Haussler95}]
\label{lem:packing}
For any number $r\geq 1$, there exists an integer $c=c(r)$ so that for every positive integer $\delta$, if $\mathcal{F}$ is a collection of subsets of a finite ground set $V$ so that $\mathcal{F}$ is $\delta$-separated and $\pi_\mathcal{F}(m)\leq r m$ for every positive integer $m$, then $|\mathcal{F}| \leq c|V|/\delta$.
\end{lemma}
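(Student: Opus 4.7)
The plan is to use the probabilistic method on a random subset $W \subseteq V$, playing off two bounds on the distribution of traces $\{F \cap W : F \in \mathcal{F}\}$.

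First, I would sample $W$ uniformly among all $m$-subsets of $V$, where $m$ is a parameter of order $|V|/\delta$ to be optimized. By the shatter function hypothesis, the random number of distinct traces $Y$ always satisfies $Y \leq \pi_{\mathcal{F}}(m) \leq rm$. Then I would count collisions, that is, pairs $F \neq F'$ in $\mathcal{F}$ with $F \cap W = F' \cap W$. Since $|F \Delta F'| \geq \delta$ by $\delta$-separation, each such pair collides on $W$ with probability at most $\binom{|V|-\delta}{m}/\binom{|V|}{m} \leq e^{-m\delta/|V|}$.

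Next I would apply convexity. Grouping $\mathcal{F}$ by its trace on $W$ into buckets of sizes $n_S$, one has $\sum_S n_S = |\mathcal{F}|$ and at most $Y$ nonempty buckets, so by Cauchy--Schwarz $\sum_S n_S(n_S - 1) \geq |\mathcal{F}|^2/Y - |\mathcal{F}|$. Taking expectations and using Jensen's inequality in the form $\mathbb{E}[1/Y] \geq 1/\mathbb{E}[Y] \geq 1/(rm)$ gives
\[
 \frac{|\mathcal{F}|^2}{rm} - |\mathcal{F}| \;\leq\; |\mathcal{F}|(|\mathcal{F}|-1)\,e^{-m\delta/|V|},
\]
which I would rearrange into an upper bound on $|\mathcal{F}|$ in terms of $m$, $r$, $\delta$, and $|V|$.

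The main obstacle is to choose $m$ so as to match the claimed linear dependence $c(r)\,|V|/\delta$. A naive balancing of the two terms above (setting $e^{-m\delta/|V|} \approx 1/(rm)$) yields only a bound of order $(r|V|/\delta)\log(r|V|/\delta)$, carrying an unwanted logarithmic factor. To eliminate this, I would follow Haussler's refinement: instead of a one-shot sample, consider the nested sequence of prefixes of a uniformly random permutation of $V$, and run the counting argument across all scales, telescoping via a chaining-style inequality. Applying the shatter bound $\pi_{\mathcal{F}}(m) \leq rm$ uniformly across scales is what ultimately forces the constant to depend only on $r$, rather than also on $|V|/\delta$. This multiscale analysis—controlling the growth of bucket sizes as elements are revealed one at a time—is the delicate step of the proof.
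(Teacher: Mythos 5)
The paper does not prove this lemma at all --- it is quoted verbatim from Haussler~\cite{Haussler95} (see also \cite[Lemma~2.2]{FPS19}) --- so there is no internal proof to compare against; your proposal has to be judged on its own. Its first half is correct and standard: sampling a uniform $m$-subset $W$, bounding collisions of $\delta$-separated pairs by $e^{-m\delta/|V|}$, and playing this against $\pi_{\mathcal F}(m)\le rm$ via Cauchy--Schwarz does yield, after optimizing $m$, a bound of order $r(|V|/\delta)\log(r|V|/\delta)$. You correctly identify that this carries an extra logarithm. Note that this weaker bound would not suffice for the paper: in \cref{lem:findingNearTwins2} the inequality $|Y'|\le c|X|/\delta$ is fed back into $|X|\le r\ell|Y'|$ to conclude $\delta\le r\ell c$, and with a $\log(|X|/\delta)$ factor present one only gets $\delta\lesssim r\ell c\log(|X|/\delta)$, which gives no contradiction and no bound on $\delta$ independent of $|X|$. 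So removing the logarithm is the entire content of the lemma here, not a cosmetic improvement.

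That is exactly where your proposal has a genuine gap. The sentence about ``prefixes of a uniformly random permutation'' and ``telescoping via a chaining-style inequality'' is not a proof, and it also does not describe how Haussler's (or Chazelle's simplified) argument actually works: those proofs are single-scale, taking one sample $A$ of size roughly $d|V|/\delta$ (where $d$ is the VC dimension, which is bounded in terms of $r$), and the log is killed by a separate combinatorial ingredient you never invoke --- the density bound for the \emph{unit-distance} ($1$-inclusion) graph of a bounded-VC set system, namely that the graph on the distinct traces of $\mathcal F$ on $A$, with edges between traces differing in exactly one element of $A$, has at most $d$ times as many edges as vertices. Combining that with a leave-one-element-out exchangeability argument (compare the traces on $A$ with those on $A\setminus\{a\}$ for a uniformly random $a\in A$, using $\delta$-separation to force many pairs agreeing on $A\setminus\{a\}$ to be split by $a$) is what produces the clean $c(r)|V|/\delta$ bound. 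Without supplying that lemma and that exchange step, your argument proves only the lossy bound, which, as noted above, is not enough for the theorem this paper builds on it.
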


\subsection*{Fundamental Graphs}
Let $M$ be a matroid. Given a basis $B$ of $M$, the \emph{fundamental circuit} of an element $e\in E(M) \setminus B$ with respect to $B$ is the unique circuit of $M$ which is contained in $B \cup \{e\}$.
The \emph{fundamental graph} $\mathcal{F}(M,B)$ is the bipartite graph with sides $B$ and $E(M)\setminus B$ where each element $e \in E(M)\setminus B$ is adjacent to the other elements in its fundamental circuit. If $T$ is a spanning forest of $G$, then the \emph{fundamental graph} of $G$ with respect to $T$ is the graph $\mathcal{F}(M(G), E(T))$.
For convenience, we denote this graph by $\mathcal{F}(G, T)$. 

%\MH{Doesn't $T$ have to be a maximum spanning forest?}
%\RM{Yes, good point, I think there used to be a definition in some other version. I added a definition and included this in the definition.}

Bouchet~\cite{graphicIsoSystems} introduced fundamental graphs for binary matroids and showed how they behave with respect to taking matroid minors. However, since Bouchet used different (more general) terminology, we refer the reader to Oum~\cite{RWAndVM} for a nice introduction to fundamental graphs of binary matroids. For general matroids, see for instance~\cite{GHMO}. The following is well-known and easy to see.

\begin{lemma}%[{\cite[Proposition 3.1\textit{(ii)}]{GHMO}}]
\label{lem:dualFundGraphs}
    For any matroid $M$ and basis $B$ of $M$, we have $\mathcal{F}(M,B)=\mathcal{F}(M^*,E(M)\setminus B)$.%MH{shouldn't it be 'isomorphic'?}\RMM{They're really equal; note that they both have the same vertex-set. Maybe this would read less weirdly with just $=$?}
\end{lemma}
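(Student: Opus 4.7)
The plan is to show that the two bipartite graphs have the same edge set by exploiting matroid--cocircuit orthogonality. Recall that if $B$ is a basis of $M$, then $E(M)\setminus B$ is a basis of $M^*$, so both graphs have the same vertex bipartition $(B, E(M)\setminus B)$. For $e \in E(M)\setminus B$ let $C(e,B)$ denote the fundamental circuit of $e$ with respect to $B$ in $M$, and for $f \in B$ let $D(f,B)$ denote its fundamental cocircuit in $M$ with respect to $B$ (equivalently, the fundamental circuit of $f$ in $M^*$ with respect to $E(M)\setminus B$). So the claim reduces to: for every $e \in E(M)\setminus B$ and $f\in B$,
\[
    f \in C(e,B) \iff e \in D(f,B).
\]

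The main tool I would use is the standard orthogonality property for matroids: a circuit and a cocircuit of the same matroid cannot meet in exactly one element. First I would observe that $C(e,B) \cap D(f,B) \subseteq \{e,f\}$: since $C(e,B)\subseteq B\cup\{e\}$ and $D(f,B)\subseteq (E(M)\setminus B)\cup\{f\}$, the intersection lies in the two elements where these sides meet, namely $f$ and $e$. By orthogonality, this intersection has size $0$ or $2$.

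Now suppose $f \in C(e,B)$. Since also $f \in D(f,B)$ by definition of fundamental cocircuit, the intersection is nonempty, and hence equal to $\{e,f\}$, so in particular $e\in D(f,B)$. The converse is symmetric, using that $e\in C(e,B)$. This proves the biconditional and therefore the lemma.

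I do not foresee a real obstacle here; the entire argument rests on orthogonality together with the defining containments of fundamental circuits and cocircuits. The only mildly delicate point is to remember to invoke \Cref{lem:dualFundGraphs}-style duality correctly, i.e.\ that the fundamental circuit of $f$ in $M^*$ with respect to $E(M)\setminus B$ is exactly the fundamental cocircuit of $f$ in $M$ with respect to $B$; this is immediate from the definition of $M^*$ but is the single fact one must not skip.
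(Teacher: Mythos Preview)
Your argument is correct and is the standard proof of this fact via circuit--cocircuit orthogonality. Note, however, that the paper does not supply its own proof of this lemma: it is quoted directly from \cite[Proposition~3.1(ii)]{GHMO}, so there is nothing to compare against. One small stylistic point: in your final paragraph you invoke ``\Cref{lem:dualFundGraphs}-style duality'' while proving that very lemma, which reads circularly; what you mean there is simply the definition of $M^*$ (cocircuits of $M$ are circuits of $M^*$), and it would be cleaner to say so.
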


%For a matroid $M$ and element $e \in E(M)$, we write $M\setminus e$ for the matroid obtained from $M$ by deleting $e$, i.e., its bases are the largest independent sets of $M$ in $E\setminus \{e\}$. Furthermore, denote by $M/e$ the matroid obtained from $M$ by contracting $e$, i.e, $(M^*\setminus e)^*$. For a graph $G$ and a vertex $v \in V(G)$, we write $G-v$ for the graph obtained from $G$ by deleting~$v$. 
Consider deleting a vertex from the non-basis side of a fundamental graph; this corresponds to just deleting that element from the matroid. Thus, we obtain the following lemma, using \cref{lem:dualFundGraphs} and the fact that for any matroid $M$ and element $e$, we have $M^*\setminus e = (M/e)^*$.

\begin{lemma}
\label{lem:minorsFundGraphs}
    For any matroid $M$, basis $B$ of $M$, and element $e \in E(M)$,\begin{align*}
    \mathcal{F}(M,B)-e = \begin{cases}
    \mathcal{F}(M\setminus e,B)         &\textrm{if }e \in E(M)\setminus B \cr
    \mathcal{F}(M/ e,B\setminus \{e\})  &\textrm{if }e \in B.  \end{cases}
    \end{align*}
\end{lemma}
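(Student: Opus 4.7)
The plan is to verify the non-basis case by direct inspection and then reduce the basis case to it by matroid duality, using \cref{lem:dualFundGraphs} together with the identity $M^*\setminus e=(M/e)^*$ stated in the paragraph preceding the lemma.

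First I would treat the case $e\in E(M)\setminus B$, where $e$ lies on the non-basis side of $\mathcal{F}(M,B)$. Since $B\subseteq E(M)\setminus\{e\}$ is independent in $M\setminus e$ and still spans every element of $E(M)\setminus\{e\}$, the set $B$ is a basis of $M\setminus e$; in particular both fundamental graphs in the claimed equation have the same bipartition $B$ and $(E(M)\setminus B)\setminus\{e\}$. For each remaining element $f\in E(M)\setminus(B\cup\{e\})$, its fundamental circuit in $M$ with respect to $B$ is the unique circuit of $M$ contained in $B\cup\{f\}$. Since $e\notin B\cup\{f\}$, the circuits of $M$ contained in $B\cup\{f\}$ are precisely the circuits of $M\setminus e$ contained in $B\cup\{f\}$, so the fundamental circuit of $f$ with respect to $B$ is the same in $M$ and in $M\setminus e$. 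Hence the neighborhoods of the remaining non-basis vertices agree, yielding $\mathcal{F}(M,B)-e=\mathcal{F}(M\setminus e,B)$.

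For the case $e\in B$, I would dualize. By \cref{lem:dualFundGraphs}, $\mathcal{F}(M,B)=\mathcal{F}(M^*,E(M)\setminus B)$, and now $e$ lies on the non-basis side of this representation. Applying the previous case to $M^*$ with basis $E(M)\setminus B$ gives
\[
\mathcal{F}(M^*,E(M)\setminus B)-e=\mathcal{F}(M^*\setminus e,\,E(M)\setminus B).
\]
Using $M^*\setminus e=(M/e)^*$ together with $E(M/e)\setminus(B\setminus\{e\})=E(M)\setminus B$, the right-hand side becomes $\mathcal{F}((M/e)^*,E(M/e)\setminus(B\setminus\{e\}))$. A second application of \cref{lem:dualFundGraphs} rewrites this as $\mathcal{F}(M/e,B\setminus\{e\})$, as required.

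There is no real obstacle here: the only things to be careful about are verifying that $B$ remains a basis after the relevant deletion and bookkeeping the ground sets correctly when switching between $M$, $M^*$, $M\setminus e$, and $M/e$. The heart of the argument is the observation that deleting an element from a matroid does not alter any circuit avoiding that element, which is exactly why deleting a non-basis vertex from the fundamental graph corresponds to matroid deletion; matroid duality then takes care of the basis-vertex case automatically.
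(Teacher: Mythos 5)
Your proof is correct and follows exactly the route the paper takes: verify the non-basis case directly (deleting a non-basis vertex corresponds to matroid deletion, since circuits avoiding $e$ are unchanged), then obtain the basis case by applying \cref{lem:dualFundGraphs} twice together with the identity $M^*\setminus e=(M/e)^*$. The paper merely states this argument more tersely in the paragraph preceding the lemma; you have filled in the bookkeeping (that $B$ remains a basis of $M\setminus e$ and that $B\setminus\{e\}$ is a basis of $M/e$, plus the ground-set identifications), which is exactly what is implicit there.
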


In \cref{sec:applications}, we use the Growth Rate Theorem of Geelen and Whittle~\cite{geelen2003cliques} to prove that the fundamental graphs of certain matroids have linear neighborhood complexity (see~\cref{lem:growthRateNeighborhoodCompl}). %In the context of matroids with a forbidden graphic minor, the Growth Rate Theorem says the following.
The Growth Rate Theorem of Geelen and Whittle~\cite{geelen2003cliques} says the following. (We remark that there is also a more general Growth Rate Theorem of Geelen, Kung, and Whittle~\cite{GrowthRateTheorem}.)

\begin{theorem}[\cite{geelen2003cliques}]
\label{thm:growthRate}
    For any integers $t$ and $q$, there exists an integer $g(t,q)$ such that any simple rank-$n$ matroid with no $U_{2,q+2}$ or $M(K_t)$ minor has at most $g(t,q)\cdot n$ elements.
\end{theorem}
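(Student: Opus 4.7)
The plan is to proceed by induction on the rank $n$, aiming to show that a simple rank-$n$ matroid $M$ with no $U_{2,q+2}$-minor and no $M(K_t)$-minor has at most $g(t,q)\cdot n$ elements. For the base case, observe that excluding $U_{2,q+2}$ means every rank-$2$ flat has at most $q+1$ points, so for any fixed rank $n_0$, the total number of elements is bounded by a constant depending only on $t$, $q$, and $n_0$, which we absorb into $g(t,q)$.

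For the inductive step, I would try to find a single element $e\in E(M)$ whose contraction $M/e$, once simplified, has at most $|E(M)|-g(t,q)$ elements. Equivalently, I want $e$ such that the number of new parallel classes created in $M/e$ (equivalently, the number of rank-$2$ flats of $M$ through $e$) is small. If such an $e$ exists, then applying the induction hypothesis to $\operatorname{si}(M/e)$, which has rank $n-1$ and inherits the excluded-minor conditions, closes the argument. So the real task is to guarantee the existence of such an element.

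To do this, I would first apply a connectivity reduction: if $M$ has a vertical separation of small order, split $M$ into two matroids of smaller rank and apply induction on each side, absorbing the elements in the separator into the constant. The hard case is when $M$ is vertically $k$-connected for some large $k=k(t,q)$. Here, the plan would be to show that such a highly connected matroid of high rank cannot have too many elements without realizing one of the forbidden minors. A natural route: pick a basis $B$ of $M$, analyze the fundamental graph $\mathcal{F}(M,B)$, and extract from its structure either a long line (forcing $U_{2,q+2}$) or a graphic substructure rich enough to produce $M(K_t)$ as a minor, for instance via a Ramsey-type argument or through the theory of frame matroids.

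The main obstacle is unquestionably the vertically $k$-connected case. Translating high connectivity together with many elements into an explicit $M(K_t)$-minor is the technical core of Geelen--Whittle's argument, and I would expect to rely on matroid branch-decomposition techniques, careful book-keeping with the connectivity function, and possibly a comparison with $\operatorname{GF}(q)$-representable matroids, since $U_{2,q+2}$-free matroids share many structural features with them. Without one of these heavier tools, even the existence of a single contractible element of low ``branching'' seems out of reach of elementary counting.
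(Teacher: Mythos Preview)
The paper does not prove this theorem. It is quoted from Geelen and Whittle~\cite{geelen2003cliques} and used as a black box (specifically, in the proof of \cref{lem:growthRateNeighborhoodCompl}). So there is no ``paper's own proof'' to compare your proposal against; the authors simply invoke the Growth Rate Theorem and, in a remark, point to the sharper bounds of Nelson, Norin, and Omana.

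As for your sketch itself: you have correctly identified the overall architecture one expects in a density argument of this kind --- induction on rank, a connectivity reduction, and a hard core case of a highly connected matroid --- and you are honest that the last step is where all the content lies. But as written this is a plan rather than a proof: the step ``extract either a long line or an $M(K_t)$-minor from a highly connected dense matroid'' is exactly the theorem, and your proposal does not supply a mechanism for it beyond gesturing at frame matroids and branch-decompositions. If you were actually asked to reproduce the Geelen--Whittle argument, you would need to engage with their round/spanning-clique machinery (or the later density methods); nothing in your outline substitutes for that. For the purposes of the present paper, however, none of this is needed: the result is imported wholesale, and your task would simply be to cite it.
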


\noindent Note that graphic matroids exclude the line $U_{2, 4}$ as a minor. 
We remark that Nelson, Norin, and Omana \cite{nelson2023density} have recently improved the bound on $g(t,q)$ in \cref{thm:growthRate} to a single exponential function.
%\Cref{thm:growthRate} generalizes a classic lemma due to Mader~\cite{MaderAverageDegree} (and optimized by Thomason~\cite{Thomason84} and Kostochka~\cite{Kostochka82, Kostochka84}) about the density of graphs with no $K_t$-minor.

%ODD CHI FUNDAMENTAL
\section{Odd coloring and neighborhood complexity}
\label{sec:oddChiFund}

In this section we prove \cref{thm:main}, which says that graphs of linear neighborhood complexity have bounded improper odd chromatic number. 

First we need to prove a lemma which says that graphs of linear neighborhood complexity contain vertices which have almost the same neighborhood. Let $G$ be a graph, and let $\delta$ be an integer. We say that two vertices $u,v \in V(G)$ are \emph{$\delta$-near-twins} if $|N(u) \Delta N(v)|\leq \delta$, where we write $\Delta$ for the symmetric difference. Likewise, we say that two vertices are \emph{twins} if they have the exact same neighborhood.

We now observe that linear neighborhood complexity is enough to obtain a pair of $\mathcal{O}(1)$-near-twins within the desired side of a bipartite graph (under some mild additional assumptions). In fact, we show that we can force the near-twins to be within any linearly-sized subset of that side of the bipartite graph.

\begin{lemma}
\label{lem:findingNearTwins2}
    For any integers $r$ and $\ell$, there exists an integer $\delta = \delta(r, \ell)$ so that if $G$ is a bipartite graph with a bipartition $(X,Y)$ so that $X$ does not contain any pair of twins and $\eta_G(m) \leq r \cdot m$ for every positive integer $m$, then for every $Y' \subseteq Y$ with $|Y'| \geq \max\{2, |Y|/\ell\}$, there exist distinct vertices $u,v \in Y'$ which are $\delta$-near-twins in $G$.
\end{lemma}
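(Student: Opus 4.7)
The plan is to apply Haussler's Shallow Packing Lemma (\cref{lem:packing}) to the family $\mathcal{F} = \{N(v) : v \in Y'\}$, viewed as a family of subsets of the ground set $V := X$. The key point is that the shatter function of $\mathcal{F}$ is controlled by the neighborhood complexity: for any $m$-element $W \subseteq X$, the collection $\{N(v) \cap W : v \in Y'\}$ is a subcollection of $\{N(v) \cap W : v \in V(G)\}$, so $\pi_\mathcal{F}(m) \le \eta_G(m) \le rm$. If some two distinct $u, v \in Y'$ satisfy $N(u) = N(v)$, they are already $0$-near-twins, so I may assume all neighborhoods are distinct and $|\mathcal{F}| = |Y'|$.

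I will argue by contradiction: assume that no two distinct vertices of $Y'$ are $\delta$-near-twins for an appropriately large $\delta = \delta(r,\ell)$ to be chosen. Then $\mathcal{F}$ is $(\delta+1)$-separated, and \cref{lem:packing} gives a constant $c = c(r)$ such that
\[
    |Y'| \;=\; |\mathcal{F}| \;\le\; \frac{c\,|X|}{\delta+1}.
\]

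To turn this into a contradiction I need a bound of the form $|X| \le O(|Y|)$, and this is where the twin-free hypothesis on $X$ enters. Since no two vertices in $X$ have the same neighborhood, the sets $\{N(x) : x \in X\}$ are pairwise distinct subsets of $Y$. Applying the definition of neighborhood complexity with the $|Y|$-element set $A := Y$ gives
\[
    |X| \;\le\; \big|\{N(x) : x \in X\}\big| \;\le\; \eta_G(|Y|) \;\le\; r\,|Y|.
\]
Combining the two inequalities with the hypothesis $|Y'| \ge |Y|/\ell$ yields
\[
    \frac{|Y|}{\ell} \;\le\; |Y'| \;\le\; \frac{c\,|X|}{\delta+1} \;\le\; \frac{cr\,|Y|}{\delta+1},
\]
so $\delta + 1 \le cr\ell$. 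Choosing $\delta := cr\ell$ (with $c = c(r)$ from \cref{lem:packing}) contradicts this inequality and completes the proof.

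There is no real obstacle here beyond assembling the pieces correctly: the argument is essentially a one-shot application of Haussler, with the twin-free hypothesis on $X$ and the linear neighborhood complexity each contributing exactly one of the two inequalities needed. The only minor care points are (i)~checking that the shatter function of the restricted family $\mathcal{F}$ in $2^X$ is still bounded by $\eta_G$, and (ii)~separately disposing of the degenerate case where two vertices in $Y'$ have identical neighborhoods, both of which are immediate.
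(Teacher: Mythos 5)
Your proof is correct and follows essentially the same route as the paper's: assume no $\delta$-near-twins in $Y'$, apply Haussler's Shallow Packing Lemma to $\mathcal{F} = \{N(v) : v \in Y'\}$ to get $|Y'| \le c|X|/\delta$ (you use $\delta+1$, an immaterial constant shift), use the twin-free hypothesis on $X$ to get $|X| \le r|Y|$, and combine with $|Y'| \ge |Y|/\ell$ to derive a contradiction for $\delta$ around $rc\ell$. The only cosmetic difference is that you dispose of the ``twins in $Y'$'' case up front, whereas the paper folds it into the contradiction hypothesis (twins are $0$-near-twins, hence $\delta$-near-twins).
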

\begin{proof}
Let $c = c(r)$ be the constant from Haussler's Shallow Packing Lemma, which is stated as \cref{lem:packing}. Then set $\delta \coloneqq r \ell c+1$. 

Going for a contradiction, suppose there are no two vertices in $Y'$ which are $\delta$-near-twins in $G$. Let $\mathcal{F}=\{N(v):v \in Y'\}$, so that $\mathcal{F}$ is a collection of subsets of $X$. Since $Y'$ does not contain any pair of twins, $|\mathcal{F}|=|Y'|$. Moreover, since $Y'$ does not contain any pair of $\delta$-near-twins, $\mathcal{F}$ is $\delta$-separated. Finally, by the assumption about the neighborhood complexity of $G$, we have that $\pi_\mathcal{F}(m) \leq rm$ for every positive $m$. So $|Y'| \leq c |X|/\delta$ by \cref{lem:packing}. However, since $X$ contains no twins, we also have\begin{align*}
|X| \leq |\{N(v) \cap Y:v \in V(G)\}|\leq r |Y|\leq r\ell |Y'|\leq r\ell c|X|/\delta.
\end{align*}Thus $\delta \leq r\ell c$, however we chose $\delta = r\ell c+1$, a contradiction.
\end{proof}

We are ready to prove a one-sided odd-coloring result for bipartite graphs, which will be the main ingredient to the full theorem.

\begin{lemma}\label{lem:main}
    For any integer $r$, there exists an integer $k = k(r)$ so that if $G$ is a bipartite graph with bipartition $(X,Y)$ and  neighborhood complexity $\eta_G(m) \leq r \cdot m$ for every positive $m$, then $Y$ can be $k$-colored such that for each non-isolated vertex $v\in X$, there exists a color which appears an odd number of times in $N(v)$.
\end{lemma}
\begin{proof}

Let $\delta$ be the function from \cref{lem:findingNearTwins2}, and set $k \coloneqq \max \left(2r+1, \delta(r, 2r+1) +1\right)$. We now show that a coloring $\varphi_Y$ as claimed exists using at most $k$ colors. Going for a contradiction, suppose otherwise. Let $G$ be a graph with a bipartition $(X,Y)$ which forms a counterexample. Choose $G,X,Y$ so that $|V(G)|$ is minimum. First, we prove two claims.

%\RMM{The claim numbering is a little weird, but maybe that's unavoidable.}
%\MH{I changed it so claims are numbered separately, but still uniquely in the entire document. Is that better?}
%\RMM{Much better!}

\begin{claim}
\label{clm:noXTwins}
There is no pair of twins in $X$.
\end{claim}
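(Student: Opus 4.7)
The plan is a short minimality argument: twins in $X$ can be removed without affecting existence of the desired coloring of $Y$, so if $G$ is a minimum counterexample no such twins can exist.

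Concretely, I would assume for contradiction that there are distinct vertices $u, v \in X$ with $N_G(u) = N_G(v)$, and then pass to the induced subgraph $G' \coloneqq G - v$ with bipartition $(X \setminus \{v\}, Y)$. This subgraph is still bipartite, and for any $A \subseteq V(G')$ and $w \in V(G')$ we have $N_{G'}(w) \cap A = N_G(w) \cap A$, so $\eta_{G'}(m) \leq \eta_G(m) \leq r \cdot m$ for all positive $m$. Thus $G'$ together with its bipartition satisfies all the hypotheses of the theorem.

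Since $|V(G')| < |V(G)|$ and $G$ was chosen as a counterexample with $|V(G)|$ minimum, $G'$ is not a counterexample. Hence there exists a coloring $\varphi_Y \colon Y \to \{1, \dots, k\}$ such that every non-isolated vertex $x \in X \setminus \{v\}$ has some color appearing an odd number of times in $N_{G'}(x)$. I would then argue that the very same coloring $\varphi_Y$ works for $G$: every $x \in X \setminus \{v\}$ has $N_G(x) = N_{G'}(x)$ and therefore inherits the odd-color property; as for $v$, if it is isolated in $G$ there is nothing to check, while if it is non-isolated then so is its twin $u$, and the guarantee for $u$ (a color appearing an odd number of times in $N_G(u) = N_G(v)$) immediately transfers to $v$. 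This contradicts the fact that no such coloring of $Y$ exists for $G$.

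There is no substantive obstacle here; the only mildly subtle point is verifying that the hypothesis $\eta_G(m) \leq r \cdot m$ is inherited by induced subgraphs, which is handled in the first paragraph. The real content is simply the observation that near-identical neighborhoods force shared odd-color witnesses, so a single representative of each twin class suffices when certifying the coloring condition on the $X$-side.
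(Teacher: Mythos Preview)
Your proposal is correct and follows essentially the same minimality argument as the paper's proof: delete one of the twins, invoke minimality to color $Y$, and observe that the same coloring works for $G$ since the deleted vertex inherits its twin's odd-color witness. You simply spell out in more detail the points the paper leaves implicit (inheritance of the neighborhood-complexity bound by induced subgraphs, and the isolated/non-isolated case split for $v$).
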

\begin{claimproof}
If $X$ contains two vertices $u$ and $v$ which are twins, then by minimality there exists a coloring $\varphi_Y:Y \rightarrow \mathbb{N}$ so that every non-isolated vertex in $X\setminus \{v\}$ has a color that appears an odd number of times in its neighborhood. This same coloring works for~$G$.
\end{claimproof}

Our goal is to apply \cref{lem:findingNearTwins2} to find a pair of near-twins in $Y$. Then we will use the same color for those two vertices, and apply the fact that $G$ is a minimum counterexample. However, we will have to avoid pairs of vertices $u,v \in Y$ so that there exists a degree-$2$ vertex in $X$ whose neighbors are $u$ and $v$. (Such a vertex forces us to use different colors for $u$ and $v$.) The next claim gives us a large subset of $Y$ to work with.

\begin{claim}
\label{clm:largeSubsetOfY}
There exists a set $Y' \subseteq Y$ of size at least $|Y|/(2r+1)$ so that no degree-$2$ vertex in $X$ has both its neighbors in $Y'$.
\end{claim}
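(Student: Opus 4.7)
The plan is to produce $Y'$ as an independent set in the auxiliary graph $H$ on vertex set $Y$ in which two vertices $u,v\in Y$ are adjacent if and only if there is a degree-$2$ vertex $x\in X$ with $N(x)=\{u,v\}$. Any independent set of $H$ clearly satisfies the condition required by the claim, so it suffices to prove that $\alpha(H)\geq |Y|/(2r+1)$.

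The key step will be to bound $|E(H)|\leq r|Y|$. First I will observe that, by \cref{clm:noXTwins}, distinct degree-$2$ vertices of $X$ have distinct neighborhoods in $Y$, so distinct degree-$2$ vertices of $X$ give rise to distinct edges of $H$; therefore $|E(H)|$ is at most the number of degree-$2$ vertices in $X$, and hence at most $|X|$. I will then bound $|X|$ via the neighborhood complexity hypothesis applied to the $|Y|$-element set $A=Y$: every $x\in X$ satisfies $N(x)\cap Y=N(x)$ since $G$ is bipartite, and the map $x\mapsto N(x)$ is injective on $X$ by \cref{clm:noXTwins}, so $|X|\leq \eta_G(|Y|)\leq r|Y|$. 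Combining these estimates gives $|E(H)|\leq r|Y|$.

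Finally I will invoke the Caro--Wei inequality $\alpha(H)\geq \sum_{v\in V(H)} 1/(d_H(v)+1)$ together with the AM--HM (equivalently Cauchy--Schwarz) inequality to deduce
\[
\alpha(H)\;\geq\;\sum_{v\in V(H)}\frac{1}{d_H(v)+1}\;\geq\;\frac{|V(H)|^2}{\sum_{v\in V(H)}(d_H(v)+1)}\;=\;\frac{|Y|^2}{|Y|+2|E(H)|}\;\geq\;\frac{|Y|}{2r+1},
\]
which is the required bound. I do not foresee any real obstacle here: the entire content of the claim is that linear neighborhood complexity, combined with the twin-freeness of $X$ already established in \cref{clm:noXTwins}, forces $H$ to be sparse enough to contain a linearly-sized independent set, and the only care needed is in counting edges of $H$ accurately and in applying the right convexity step.
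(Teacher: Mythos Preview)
Your argument is correct and uses the same auxiliary graph $H$ as the paper, but the two proofs diverge in how they bound $\alpha(H)$. The paper shows that $H$ is $2r$-degenerate: for \emph{every} induced subgraph $H'$ of $H$, it observes that each edge $\{u,v\}$ of $H'$ comes from some $x\in X$ with $N(x)=\{u,v\}\subseteq V(H')$, so distinct edges of $H'$ yield distinct sets $N(x)\cap V(H')$, whence $|E(H')|\le \eta_G(|V(H')|)\le r|V(H')|$; greedy colouring then gives the independent set. You instead bound only $|E(H)|$ itself, routing the count through $|X|$ via \cref{clm:noXTwins} and the neighbourhood complexity applied to $A=Y$, and then extract the independent set with Caro--Wei plus AM--HM. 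Both routes land on the same numerical bound $|Y|/(2r+1)$; the paper's version yields the stronger structural conclusion of $2r$-degeneracy (and does not need \cref{clm:noXTwins} for this step), while yours is arguably more self-contained since Caro--Wei and a single global edge count suffice.
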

\begin{claimproof}
Consider an auxiliary graph $H$ with vertex-set $Y$ where two vertices $u,v \in Y$ are adjacent if there exists a vertex in $X$ whose neighbors are precisely $u$ and $v$. We claim that $H$ is $2r$-degenerate. This will complete the proof since then we can find an independent set of size at least $|Y|/(2r+1)$ by taking a greedy coloring and choosing the largest color class. So, consider an induced subgraph $H'$ of $H$. The number of edges of $H'$ is at most the number of different neighborhoods $G$ has within $V(H')$. So $|E(H')|\leq r\cdot |V(H')|$ by our bound on the neighborhood complexity of $G$. Thus $H$ is $2r$-degenerate, as desired.
\end{claimproof}

%\MH{shouldn't it be $|E(H')|\leq r\cdot |V(H')|$?}
%\RM{Yes, fixed.}

Let $Y' \subseteq Y$ be as in \cref{clm:largeSubsetOfY}. Notice that $|Y'| \geq 2$ since otherwise $|Y| \leq 2r+1$ and we could color every vertex in $Y$ with a different color. Now, notice that $G, X, Y, Y'$ satisfy the conditions of \cref{lem:findingNearTwins2} with $\ell \coloneqq 2r+1$. (Recall that $X$ contains no pair of twins by \cref{clm:noXTwins}.) Thus, there exist distinct vertices $u,v \in Y'$ which are $(k-1)$-near-twins. By the minimality of $G$, there exists a coloring $\varphi:Y\setminus \{u,v\}\rightarrow \mathbb{N}$ with at most $k$ colors so that every vertex in $X$ with at least one neighbor in $Y\setminus \{u,v\}$ has a color that appears an odd number of times in its neighborhood. By the choice of $Y'$, this is actually every vertex in $X$ except for possibly some vertices of degree $0$ or $1$. We do not need to worry about the vertices in $X$ of degree $0$ or $1$ as they automatically satisfy the condition.

Let us say that a color $i$ is \emph{bad} if there exists a vertex $x \in N(u) \Delta N(v)$ so that $i$ is the unique color that appears an odd number of times in $N(x) \cap (Y \setminus \{u,v\})$. Thus, there are at most $|N(u) \Delta N(v)| \leq k-1$ bad colors. It follows that there exists a coloring $\varphi_Y$ of $Y$ which is obtained from $\varphi$ by coloring $u$ and $v$ the same color, which is not bad. %It follows that every non-isolated vertex $x \in X$ has a color that appears an odd number of times in $c_Y$. 
It follows that for every non-isolated vertex $x \in X$ there is a color that appears an odd number of times in $N(x)$. (Note that if $x$ is adjacent to both of $u$ and $v$ or neither of $u$ and $v$, then the number of times each color appears in $N(x)$ has the same parity as in $N(x) \setminus \{u,v\}$.)
\end{proof}

Now we are ready to prove \cref{thm:main}. We restate the theorem below for convenience.

\nghComplex*
\begin{proof}
Denote the tensor product $H=G\times K_2$, with vertex set $V(H)=V(G)\times[2]$ and an edge $\{(u,i),(v,j)\}$ if and only if $\{u,v\}\in E(G)$ and $\{i,j\}=[2]$. If $A\subseteq V(G)\times[2]$ denote $A_i= \{v\in V(G)\mid (v,i)\in A\}$ for $i=1,2$. Then for any $A \subseteq V(G)\times[2]$, $$|\{N(v,i) \cap A: (v,i) \in V(H)\}|\leq |\{N(v) \cap A_1 : v \in V(G)\}|+|\{N(v) \cap A_2 : v \in V(G)\}|.$$
If $|A|=m$, then the latter is at most $2\eta_G(m)$. Hence $\eta_H(m)\leq 2rm$ for all $m$. Further, $H$ is bipartite with bipartition $(X,Y)=(V(G)\times\{1\},V(G)\times\{2\})$. Thus,  we can color $Y$ with $k(2r)$ colors so that every non-isolated vertex in $X$ has a color that appears an odd number of times in its neighborhood, where $k$ is the function from \cref{lem:main} . 
Since $Y=V(G)\times\{2\}$, this coloring can be interpreted as an improper odd coloring of $G$.
\end{proof}

\section{Applications}
\label{sec:applications}
In this section we prove the two corollaries of \cref{thm:main} (\cref{cor:NeighborhoodComplOddColoring} and \cref{cor:oddColoringMat}). 

We begin by using the Growth Rate Theorem (stated as \cref{thm:growthRate}) to control the neighborhood complexity of certain fundamental graphs. We note that we could do a ``one-sided'' version of the following lemma. However this version, which is symmetric under taking duals, is more convenient for our purposes. Note that \cref{cor:oddColoringMat} immediately follows from \cref{thm:main} and the following lemma.

%\JD{The same proof should work for any matroid that is $\GF(q)$-representable or forbids $U_{2,n}$ and $U_{n-2,n}$ as a minor. However we would also need the general analogues of \cref{lem:dualFundGraphs} and \cref{lem:minorsFundGraphs}.}

\begin{lemma}
\label{lem:growthRateNeighborhoodCompl}
For any integers $t$ and $q$, there exists an integer $r=r(t,q)$ so that if $M$ is a matroid with no $U_{2,q+2}$, $U_{q,q+2}$, $M(K_t)$, or $M(K_t)^*$ minor, and $B$ is a basis of $M$, then the neighborhood complexity $\eta_{\mathcal{F}(M,B)}(m)$ of $\mathcal{F}(M,B)$ is at most $r\cdot m$ for every positive $m$. 
\end{lemma}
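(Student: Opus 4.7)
The plan is to bound the number of distinct neighborhoods on each side of $\mathcal{F}(M,B)$ separately, using matroid duality together with the Growth Rate Theorem. By \cref{lem:dualFundGraphs} we have $\mathcal{F}(M,B) = \mathcal{F}(M^*, E(M) \setminus B)$, and the list of excluded minors $\{U_{2,q+2}, U_{q,q+2}, M(K_t), M(K_t)^*\}$ is closed under duality. So applying the one-sided bound both to $M$ and to $M^*$ will cover both sides of the fundamental graph, and it suffices to prove that for every $A \subseteq E(M)$ of size $m$,
\begin{align*}
    \bigl|\{N_{\mathcal{F}(M,B)}(v) \cap A : v \in E(M) \setminus B\}\bigr| = O_{t,q}(m).
\end{align*}

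To obtain this one-sided bound, I would set $A_B \coloneqq A \cap B$ and observe that for every non-basis $v$, $N(v) \cap A = C_v \cap A_B$, where $C_v$ is the fundamental circuit of $v$ in $M$. The main matroid-theoretic step is to contract all basis elements outside $A_B$: using \cref{lem:minorsFundGraphs} together with the standard behaviour of circuits under contraction, the minor $M' \coloneqq M / (B \setminus A_B)$ has $A_B$ as a basis of rank $|A_B| \leq m$, and for every non-loop non-basis $v$ of $M'$, the fundamental circuit of $v$ in $M'$ with respect to $A_B$ is exactly $(C_v \cap A_B) \cup \{v\}$. So counting distinct sets $C_v \cap A_B$ reduces to counting distinct supports of non-basis elements of $M'$ in the basis $A_B$.

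Next I would pass to the simplification $M'' \coloneqq \mathrm{si}(M')$, which is a simple rank-$|A_B|$ matroid and remains a minor of $M$, so it still excludes $U_{2,q+2}$ and $M(K_t)$ as minors; hence \cref{thm:growthRate} gives $|E(M'')| \leq g(t,q)\cdot |A_B| \leq g(t,q)\cdot m$. A brief application of strong circuit elimination shows that parallel non-basis elements of $M'$ share the same support in $A_B$, so distinct supports of non-basis elements of $M'$ split into at most three types: the empty set (from loops of $M'$), the singletons $\{b\}$ with $b \in A_B$ (from elements parallel to a basis element, which are discarded in $M''$), and supports of size at least $2$ (each realised by some element of $E(M'') \setminus A_B$). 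Summing, the number of distinct supports is at most $1 + |A_B| + |E(M'')| \leq (g(t,q)+2)m$, and combining with the symmetric dual bound yields the lemma with $r(t,q) = 2(g(t,q)+2)$. I do not foresee a substantial obstacle; the only care needed is in the bookkeeping at the simplification step, to ensure that loops, elements parallel to $A_B$, and the empty neighborhood are each counted exactly once.
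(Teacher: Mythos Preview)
Your proposal is correct and follows essentially the same approach as the paper: reduce via duality to bounding traces of the non-basis side on a subset $A_B$ of the basis, contract $B\setminus A_B$ to drop the rank to $|A_B|$, and apply the Growth Rate Theorem to the resulting simple rank-$|A_B|$ minor. The only cosmetic difference is that the paper first selects one representative $v$ per distinct trace of size at least~$2$ and then checks directly that the resulting minor is simple, whereas you contract first and pass to $\mathrm{si}(M')$ using circuit elimination; both routes produce the same simple matroid and the same bound up to constants.
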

\begin{proof}
Let $g$ denote the function from \cref{thm:growthRate}; so any simple rank-$n$ matroid with no $U_{2,q+2}$ or $M(K_t)$ minor has at most $g(t,q) \cdot n$ elements. Let $M$ be a matroid with no $U_{2,q+2}$, $U_{q,q+2}$, $M(K_t)$, or $M(K_t)^*$ minor, and let $B$ be a basis of $M$. 

Since $\mathcal{F}(M,B)$ is bipartite, and a matroid and its dual have the same fundamental graph by \cref{lem:dualFundGraphs}, it suffices to consider the number of different neighborhoods within a set $A \subseteq B$. Let $F \subseteq E(M) \setminus B$ be obtained by selecting one vertex $v$ with $N(v) \cap A = A'$ for each subset $A' \subseteq A$ for which $|A'|\geq 2$, and such a vertex $v$ exists. Thus $|\{N(v) \cap A: v \in V(\mathcal{F}(M,B))\}| \leq |F|+|A|+1$ because $A$ has $|A|+1$ subsets with at most one element.

Now, let $M'$ be the matroid obtained from $M$ by contracting all elements in $B\setminus A$ and deleting all elements in $(E(M) \setminus B)\setminus F$. By \cref{lem:minorsFundGraphs}, the subgraph of $\mathcal{F}(M,B)$ induced by $A \cup F$ is equal to $\mathcal{F}(M', A)$. Note that $M'$ is a rank-$|A|$ matroid with no $U_{2,q+2}$ or $M(K_t)$ minor. 

We claim that $M'$ is simple as well. %$|F| \leq g(t)\cdot |A|-|A|$, \KK{Isn't the neighborhood complexity bounded by $|F|+|A|+1$, which is also the quantity bounded by GRT, so why not write $|F|+|A|+1\leq g(t)\cdot |A|$?}.\RMM{I took out the plus one because there's no strict inequality in the statement we wrote of the GRT.} 
First of all, $M'$ has no loops since $A$ is a basis and every vertex in $F$ has at least one neighbor in $A$. Next, $M'$ has no parallel pair with one element in $A$ and the other in $F$ since every element in $F$ has at least two neighbors in $A$. Finally, $M'$ has no parallel pair with both elements in $F$ since no two vertices in $F$ have the same neighborhood in $A$. So $M'$ is simple, and the Growth Rate Theorem implies that $|F|+|A|=|E(M')|\leq g(t,q)\cdot |A|$, which completes the proof.
\end{proof}

Recall that a class of graphs is \emph{$\chio$-bounded} if there exists a function $f$ so that any graph in the class with clique number $\omega$ has odd chromatic number at most $f(\omega)$. Similarly, a class is \emph{$\chi$-bounded} if there exists a function $g$ so that any graph in the class with clique number $\omega$ has chromatic number at most $g(\omega)$. We denote the chromatic number of a graph $G$ by~$\chi(G)$. We say that a class is \emph{hereditary} if it is closed under taking induced subgraphs.

The following lemma from~\cite{jiménez2024boundednessproperconflictfreeodd} implies that in order to understand whether a hereditary $\chi$-bounded class is also $\chio$-bounded, it suffices to consider the bipartite graphs in the class. We state a slightly weaker but simpler version of the lemma. 

\begin{lemma}[{\cite[Lemma~1.3]{jiménez2024boundednessproperconflictfreeodd}}]
\label{lem:reducesToBip}
If every induced bipartite subgraph of a graph $G$ has odd chromatic number at most $t$, then $\chio(G)\leq t^2 \chi(G)$.
\end{lemma}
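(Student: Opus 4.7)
The plan is to use a proper $\chi(G)$-coloring of $G$ as a skeleton and refine it with two extra coordinates drawn from the bipartite odd colorings provided by the hypothesis. Fix a proper coloring $\phi \colon V(G) \to [\chi(G)]$ with color classes $V_1, \dots, V_{\chi(G)}$. Since each $V_i$ is independent, every pair $V_i \cup V_j$ induces a bipartite subgraph, so by hypothesis it admits a proper odd coloring $\sigma_{ij} \colon V_i \cup V_j \to [t]$. I will define a coloring $f \colon V(G) \to [\chi(G)] \times [t] \times [t]$ whose first coordinate is $\phi(v)$, which automatically gives properness; the whole task is to design the refinement $(a(v), b(v)) \in [t]^2$ so that the odd condition holds at every non-isolated vertex.

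The verification rests on a parity-averaging trick. Given a non-isolated $v \in V_i$, pick a partner class $V_k$ containing a neighbor of $v$; the odd coloring $\sigma_{ik}$ supplies some $c^* \in [t]$ appearing an odd number of times among $\{\sigma_{ik}(u) : u \in N(v) \cap V_k\}$. If the refinement is set up so that $a(u) = \sigma_{ki}(u)$ for every $u \in N(v) \cap V_k$, then $U = \{u \in N(v) \cap V_k : a(u) = c^*\}$ has odd size, and since $\sum_{b \in [t]} |\{u \in U : b(u) = b\}| = |U|$ is odd, some $b^* \in [t]$ makes $(k, c^*, b^*)$ appear an odd number of times in $N(v)$ (classes $V_\ell$ with $\ell \ne k$ contribute colors with a different first coordinate and hence cannot interfere). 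For $\chi(G) = 3$ this works cleanly by setting $(a(v), b(v)) = (\sigma_{i, j_1}(v), \sigma_{i, j_2}(v))$ with $\{j_1, j_2\} = [\chi(G)] \setminus \{i\}$, using exactly $t^2 \chi(G)$ colors.

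The main obstacle for $\chi(G) > 3$ is that each $u \in V_k$ has $\chi(G) - 1 > 2$ possible partner perspectives $\sigma_{k,j}(u)$ but only two slots $a(u), b(u) \in [t]$ in which to encode them; thus $u$ cannot broadcast $\sigma_{k, i}(u)$ simultaneously for every $i \ne k$. The fix should be to assign to each class $V_k$ a pair of focus indices $\{s_1(k), s_2(k)\} \subseteq [\chi(G)] \setminus \{k\}$ so that every non-isolated $v \in V_i$ has a partner class $V_k$ with $i \in \{s_1(k), s_2(k)\}$, i.e.\ so that for each $i$ the family $\{V_k : i \in \{s_1(k), s_2(k)\}\}$ dominates $V_i$. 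Establishing the existence of such a focus assignment is the technical heart of the argument; I would handle it either by induction on $\chi(G)$ (removing a well-chosen class and reusing the inductive coloring with a fresh block of $t^2$ colors) or by adapting the focus choice to each vertex, with the extra bookkeeping absorbed into the $t^2 \chi(G)$ budget.
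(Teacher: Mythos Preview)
The paper does not supply a proof of this lemma; it is quoted from the cited reference and used as a black box. So the question is simply whether your sketch can be completed as written.

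Your product-coloring framework and the parity-averaging trick are correct, and the case $\chi(G)\le 3$ goes through. The gap is in the general step. A global focus assignment $k\mapsto\{s_1(k),s_2(k)\}$ with your domination property need \emph{not} exist once you have fixed an arbitrary optimal proper coloring. For a concrete obstruction, let $G$ consist of a clique on $\{a_1,a_2,a_3,a_4\}$ together with pendants $w_1,w_2,w_3$, each adjacent only to $a_4$; then $\chi(G)=4$, and the proper $4$-coloring with $V_i=\{a_i,w_i\}$ for $i\le 3$ and $V_4=\{a_4\}$ forces $i\in\{s_1(4),s_2(4)\}$ for every $i\in\{1,2,3\}$ (each $w_i$ sees only $V_4$), which is impossible. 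Your fallback of a per-vertex focus does not rescue the argument within the stated budget: if different $u\in V_k$ carry $\sigma_{k,s(u)}(u)$ for varying $s(u)$, then the $a$-values that $v$ reads in $N(v)\cap V_k$ come from different bipartite colorings and no single $\sigma_{ik}$ guarantees an odd class; recording $s(u)$ in the color costs an extra factor of~$\chi(G)$. The bare inductive removal of one class also stalls, since the odd coloring of $G-V_k$ produced by the inductive hypothesis has no reason to look odd from the vertices of $V_k$.

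The missing ingredient is that $\phi$ should not be arbitrary. Replace it by a proper $\chi(G)$-coloring with the Grundy-type property that every vertex of color $i\ge 2$ has a neighbor of color $i-1$ (start from any optimal coloring and repeatedly recolor a vertex of color $i$ with no neighbor of color $i-1$ down to $i-1$; this terminates and still uses exactly $\chi(G)$ colors). Now the fixed focus assignment $\{s_1(k),s_2(k)\}=\{k{+}1,\,1\}$ (with the obvious conventions at $k=1$ and $k=\chi(G)$) works: set $a(u)=\sigma_{k,k+1}(u)$ and $b(u)=\sigma_{k,1}(u)$ for $u\in V_k$. A non-isolated $v\in V_i$ with $i\ge 2$ reads the $a$-coordinate on $N(v)\cap V_{i-1}$, and a non-isolated $v\in V_1$ reads the $b$-coordinate on $N(v)\cap V_j$ for any $j$ containing a neighbor; your parity trick then finishes inside the $t^2\chi(G)$ budget.
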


Now we may prove \cref{cor:NeighborhoodComplOddColoring}. We omit the definitions of merge-width and vertex-minors; instead we refer the reader to~\cite{mergeWidthIntro} and~\cite{Davies2022vm}, respectively. The corollary is restated below for convenience.

\corWidthEtc*

\begin{proof}
First suppose that $\mathcal{G}$ has bounded merge-width. Bonamy and Geniet~\cite{mergewidthChi} recently proved that classes with bounded merge-width are $\chi$-bounded and have linear neighborhood complexity. Thus, by \cref{rem:product} and \cref{thm:main}, the class $\mathcal{G}$ is $\chio$-bounded.

%and classes with a forbidden vertex-minor~\cite{Davies2022vm}

Now suppose that $\mathcal{G}$ has a forbidden vertex-minor. We may assume that $\mathcal{G}$ is hereditary because taking induced subgraphs cannot create a new vertex-minor. Moreover, $\mathcal{G}$ is $\chi$-bounded by~\cite{Davies2022vm}. Thus, by \cref{lem:reducesToBip}, it suffices to prove that the bipartite graphs in $\mathcal{G}$ have bounded odd chromatic number. 

We observe that the $1$-subdivision of $K_t$ contains every $t$-vertex graph as a vertex-minor. Since every pivot-minor is also a vertex-minor, it follows that any class of bipartite graphs with a forbidden vertex-minor also forbids the $1$-subdivision of $K_t$ as a pivot-minor (for some $t$). Now, consider a bipartite graph $G \in \mathcal{G}$, and let $A$ denote the bipartite adjacency matrix of $G$. (So the rows of $A$ correspond to one side of the bipartition, the columns of $A$ correspond to the other, and each entry is $0$ or $1$ depending on whether the corresponding vertices are adjacent.) Let $M$ denote the column matroid of the binary matrix $[I \text{ | } A]$ where $I$ is an identity matrix whose size equals the number of rows of $A$. Then $G$ is the fundamental graph of $M$ with respect to the basis corresponding to the columns of $I$. Also notice that the $1$-subdivision of $K_t$ is itself the fundamental graph of $M(K_{t+1})$ with respect to the basis formed by the edges of a $(t+1)$-vertex star. 

Bouchet~\cite{graphicIsoSystems} proved a connection between minors of binary matroids and pivot-minors of their fundamental graphs which yields the following result; if $M(K_{t+1})$ or its dual is a minor of $M$, then the $1$-subdivision of $K_t$ is a pivot-minor of $G$. It follows that $M$ forbids both $M(K_{t+1})$ and its dual as minors. Since $M$ is binary, it also forbids $U_{2,4}$ as a minor. So by \cref{lem:growthRateNeighborhoodCompl}, the bipartite graphs in $\mathcal{G}$ have linear neighborhood complexity. Thus, by \cref{thm:main}, the bipartite graphs in $\mathcal{G}$ have bounded improper odd chromatic number. This also bounds their odd chromatic number by \cref{rem:product}, as desired.
\end{proof}

%BIPARTITE CIRCLE GRAPHS
\section{Odd coloring fundamental graphs of graphs excluding a minor}
\label{sec:bipartiteCircle}

In the following we consider an asymmetric situation, in which we want to color only $E\setminus B$ in $\mathcal{F}(M,B)$ such that every non-isolated vertex from $B$ has some color occurring an odd number of times in its neighborhood. In fact, we only consider this setting in graphic matroids of graphs with no $K_t$-minor.
That is, we give another proof that for any $t$ there is a constant $k = k(t)$ so that given any graph $G$ with no $K_t$-minor and any spanning forest $T$ of $G$, we can color the non-forest edges with $k$ colors so that every non-bridge edge $e$ has a color which appears an odd number of times in its neighborhood in $\mathcal{F}(G,T)$.

Given a graph $G$ and a spanning forest $T$ of $G$, we denote the \emph{fundamental cut} of $T$ with respect to an edge $f \in E(T)$ by $C^*(T,f)$, i.e., it contains all edges in $E(G)$ which join the vertex-sets of the two components of $T-f$ which are not components of $T$. Note that $f$ itself is included in $C^*(T,f)$. Indeed, $C^*(T,f)\setminus\{f\}$ is the neighborhood of $f$ in $\mathcal{F}(G,T)$.

%\MH{As $T$ is a forest $T-f$ might have way more than two components. Should we be more specific saying the two components of $T-f$ that aren't components of $T$ as well?}
%\RMM{Corrected.}

\begin{lemma}
\label{lem:one-fundCut}
    For any graph $G$, any spanning forest $T$ of $G$, and any edge $f_0$ of $T$, there exists a coloring $\varphi: C^*(T,f_0) \setminus \{f_0\} \rightarrow \{1,\ldots,9\}$ so that for each forest-edge $f \in E(T)$ with $C^*(T,f) \cap (C^*(T,f_0) \setminus \{f_0\}) \neq \emptyset$ there exists a color $i \in \{1,\ldots,8\}$ which occurs an odd number of times on $C^*(T,f) \cap (C^*(T,f_0) \setminus \{f_0\})$.
\end{lemma}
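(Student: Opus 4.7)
The plan is to translate this one-sided parity-coloring statement into a combinatorial problem about two laminar families on the non-tree edge set, and then to solve that problem via the product of two single-tree colorings.

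\emph{Reformulation.} Only the component $T_0$ of $T$ containing $f_0$ matters, since every non-tree edge in $C^*(T,f_0)$ has both endpoints in $V(T_0)$. Root $T_0$ at one endpoint $r$ of $f_0$; let $r'$ be the other endpoint, $T_2 = T_{r'}$ the subtree rooted at $r'$, and $T_1 = T_0 - V(T_2)$. Each $e \in C^*(T,f_0) \setminus \{f_0\}$ splits as a pair $(u_e, v_e)$ with $u_e \in V(T_1)$ and $v_e \in V(T_2)$. For any tree edge $f \in T_0 \setminus \{f_0\}$ with child endpoint $q$, the intersection $C^*(T,f) \cap (C^*(T,f_0)\setminus\{f_0\})$ equals $\{e : u_e \in V(T_q)\}$ when $q \in V(T_1)$ and $\{e : v_e \in V(T_q)\}$ when $q \in V(T_2)$. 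Letting $q$ vary gives two laminar families $\mathcal{L}_1, \mathcal{L}_2$ on the non-tree edges, and the lemma becomes: $9$-color the non-tree edges so that every nonempty $L \in \mathcal{L}_1 \cup \mathcal{L}_2$ contains a color from $\{1,\dots,8\}$ with odd multiplicity.

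\emph{Single-tree sublemma.} The key technical ingredient I would prove is: for any rooted tree $T$ and placement $\phi \colon I \to V(T)$ of items at its vertices, one can color $I$ with $3$ colors so that every nonempty non-root subtree has some color appearing an odd number of times on the items in it. This follows from a bottom-up induction, in which at each vertex $w$ one distributes the items placed at $w$ among the three colors to realize any desired parity vector $(a_w,b_w,c_w) \bmod 2$ satisfying the obvious sum constraint; this flexibility avoids the only failure mode, which is an all-even count on a nonempty subtree.

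\emph{Combination.} Applying the sublemma to $T_1$ (with items $\{u_e\}$) and to $T_2$ (with items $\{v_e\}$) yields $3$-colorings $c_1, c_2$ of the non-tree edges. The product $c(e) = (c_1(e), c_2(e)) \in \{1,2,3\}^2$ gives $9$ colors, and for each $L \in \mathcal{L}_1$ the $c_1$-guarantee ensures some row of the $3 \times 3$ count-matrix on $L$ has odd sum, hence contains an odd cell; symmetrically for $\mathcal{L}_2$. The main obstacle is the final labeling step: the guaranteed odd cell might happen to be the single product-color designated as color $9$, since an odd-sum row need not have more than one odd cell. I expect to overcome this by either (i) strengthening the single-tree sublemma so that a fixed ``forbidden'' color is never the unique odd one (with the third color acting as slack in the induction), allowing us to label the product-cell corresponding to both forbidden colors as color $9$; or (ii) using color $9$ directly as additional slack in the product construction, recoloring a carefully-chosen edge in each offending $L$. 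The consistency of such slack fixes across overlapping laminar sets is the delicate point and should follow from a laminar-family argument.
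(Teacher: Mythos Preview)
Your approach is essentially the paper's. The decomposition into two laminar families $\mathcal{L}_1,\mathcal{L}_2$ indexed by the two sides of $T-f_0$, followed by a $3$-coloring for each and a product, is exactly what the paper does.

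The only substantive difference is that the ``obstacle'' you flag is not left open in the paper: your fix (i) is precisely what is used. The paper cites \cite[Lemma~3.8]{jiménez2024boundednessproperconflictfreeodd}, which gives, for any laminar family on a finite ground set, a $3$-coloring in which every nonempty member has an odd count of color~$1$ or an odd count of color~$2$ (color~$3$ is pure slack). With this strengthened single-tree lemma in hand, an odd row of the $3\times 3$ count matrix can always be taken to be row~$1$ or row~$2$, so the witnessing odd cell is never $(3,3)$; hence $(3,3)$ is safely relabeled as color~$9$. Your bottom-up induction sketch is along the right lines for proving this strengthened lemma directly, but you should state and prove it in that strong form from the outset rather than proving the weak form and then patching. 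Fix (ii) is unnecessary.

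One small omission: your reformulation restricts to $f\in T_0\setminus\{f_0\}$, but the lemma also requires the conclusion for $f=f_0$ itself, where the intersection is the entire ground set. The paper handles this by taking $T_1' = T_1 + f_0$, so that $f_0$ contributes the full set to $\mathcal{S}_1$; equivalently, make sure your single-tree sublemma covers the root subtree as well.
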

\begin{proof}
    % Let $T'$ and $T''$ be the two subtrees of $T$ with $T' \cap T'' = \{t_0\}$ and $T' \cup T'' = T$.
    % Let $t = v'v''$ and consider the component $T'$ of $T - t_0$ rooted at $v'$, as well as the component $T''$ of $T - t_0$ rooted at $v''$.
    The claimed coloring can then be defined independently on each connected component of $G$. Hence, we assume that $G$ is connected and $T$ is a spanning tree. 
    Let $T_1$ and $T_2$ be the two components of $T - f_0$.
    We shall color the non-tree edges in the fundamental cut $C^*(T,f_0)$ of $f_0$, i.e., the non-tree edges with one endpoint in $T_1$ and the other endpoint in $T_2$.
    Consider the tree $T_1' = T_1 + f_0$ to be rooted at the endpoint of $f_0$ in $T_2$.
    Observe that for any edge $f$ in $T_1'$, a non-tree edge $e \in C^*(T,f_0)$ lies in $C^*(T,f)$ if and only if the endpoint of $e$ in $T_1$ lies in the subtree of $T_1'$ below $f$.
    It follows that the set family $\mathcal{S}_1 = \{ C^*(T,f) \cap (C^*(T,f_0) \setminus \{f_0\}) \colon f \in T_1'\}$ is \emph{laminar}, i.e., its members are either disjoint or in a containment relation.
    From~\cite[Lemma 3.8]{jiménez2024boundednessproperconflictfreeodd} we get that there is a coloring $\varphi_1: C^*(T,f_0) \setminus \{f_0\} \to\{1,2,3\}$ such that every element of $\mathcal{S}_1$ has an odd number of elements colored $1$ or an odd number of elements colored~$2$.

    Symmetrically, there is a coloring $\varphi_2: C^*(T,f_0) \setminus \{f_0\} \to\{1,2,3\}$ such that every element of $\mathcal{S}_2 = \{ C^*(T,f) \cap (C^*(T,f_0) \setminus \{f_0\}) \colon f \in T_2'\}$ has an odd number of elements colored $1$ or an odd number of elements colored~$2$.
    Setting $\varphi(e) = (\varphi_1(e),\varphi_2(e))$ for every $e \in C^*(T,f_0) \setminus \{f_0\}$, we obtain the desired $9$-coloring.
    Indeed, if $C^*(T,f) \cap (C^*(T,f_0) \setminus \{f_0\})$ has an odd number of elements colored $i$ in $\varphi_1$, then in $\varphi$ these elements are distributed into three color classes $(i,1)$, $(i,2)$, $(i,3)$, at least one of which is necessarily odd.
    {Note that since by~\cite[Lemma 3.8]{jiménez2024boundednessproperconflictfreeodd} in both factors of the product coloring the color $3$ is not needed in order to witness having an odd color in the neighborhoods, we can designate color $(3,3)$ as color $9$ from the statement.}
\end{proof}

%For the next result, recall that the \emph{star-arboricity} $\mathrm{sa}(G)$ of a graph $G$ is the smallest $\ell$, such that $E(G)$ can be decomposed into $\ell$ star forests, i.e., each component has diameter at most $2$. Note that the $\mathrm{sa}(G)$ is bounded by twice the degeneracy of $G$, see~\cite{AA89}. Hence, in particular for any integer $t$, there exists an integer  Indeed, by the Kostochka–Thomason bound $\ell(t)\in O(t\sqrt{\log t})$, see~\cite{Kos82,Kos86,Tho01,Tho84}.  
We are ready to prove the main theorem of the section, where we recall that $\ell(t)$ is the smallest integer so that if $G$ is a graph with no $K_t$ minor, then the star-arboricity of $G$ is at most $\ell(t)$.

\thmfundCuts*

% \begin{theorem}
% \label{thm:fundCuts}
%     % For any graph $G$ and any spanning forest $T$ of $G$, there exists a function $\phi:E(G)\setminus E(T) \rightarrow \{1,2,\ldots,25\}$ so that for each non-bridge edge $e \in E(T)$, there exists a color $i \in \{1,2,\ldots, 25\}$ which occurs an odd number of times on $C^*(T,e)$.
%     %%% new version %%%
%     For any integer $t$, there exists an integer $k = k(t)$ so that if $G$ is a graph with no $K_t$ minor, and $T$ is a spanning forest of $G$, then there exists a coloring $\phi:E(G)\setminus E(T) \rightarrow \{1,\ldots,k\}$ so that for each non-bridge edge $f \in E(T)$, there exists a color $i \in \{1,\ldots,k\}$ which occurs an odd number of times on $C^*(T,f)$. Indeed, $k(t)\leq 16\ell(t-1)+1$, with $\ell(\cdot)$ defined as above the theorem.
% \end{theorem}
% 
\begin{proof}
    Note that we may assume that $G$ is connected and $T$ is a spanning tree by considering the components separately.
    Further, we may assume that $G$ is $2$-edge-connected by contracting all bridges.

    We root $T$ at an arbitrary vertex $r$.
    We shall construct a sequence $G_0 \subseteq G_1 \subseteq \cdots \subseteq G_x = G$ of subgraphs of $G$, starting with $G_0$ being just vertex $r$.
    For a non-tree edge $e$, the \emph{fundamental cycle} $C(T,e)$ is the unique cycle in $T + e$ and for a tree-edge $f$ denote by $C^*(T,f)$ the fundamental cut of $T$ with respect to $f$.
    Note that for a tree-edge $f$ and a non-tree edge $e$ we have $f \in C(T,e)$ if and only if $e \in C^*(T,f)$.
    For $i > 0$, having defined $G_{i-1}$, let
    \[
        G_i \coloneqq \bigcup \{C(T,e) \colon e \in E(G)\setminus E(T), C(T,e) \text{ contains a vertex of } G_{i-1}\}.
    \]
    Observe that for each $i\geq 0$ we have that $T_i = T \cap G_i$ is a subtree of $T$ containing the root $r$.
    For convenience, for each $i > 0$ we set $A_i \coloneqq E(T_i) - E(T_{i-1})$ as the set of tree-edges added in step~$i$, and $B_i \coloneqq (E(G_i) - E(G_{i-1})) - E(T)$ as the set of non-tree edges added in step~$i$.

    \begin{claim}
    \label{claim:tree-nontree}
        If for a tree-edge $f \in A_i$ we have $C^*(T,f) \cap B_j \neq \emptyset$, then $i\leq j \leq i+1$.
    \end{claim}
    \begin{claimproof}
        Consider any non-tree edge $e \in B_j$ that lies on the fundamental cut $C^*(T,f)$ of the tree-edge $f \in A_i$.
        Then, $f$ lies on the fundamental cycle $C(T,e)$, and it follows that $j \leq i+1$.
        
        Conversely, since $e \in B_j$, the fundamental cycle $C(T,e) \subseteq G_j$ and thus $i \leq j$.
    \end{claimproof}

    Next, we color the edges in $B_i$ (which is non-empty only for $i > 0$), i.e., the non-tree edges in $G_i$ that are not in $G_{i-1}$.
    To this end, consider $G_i$ with spanning tree $T_i$.
    Contract the subtree $T_{i-1}$ of $T_i$ to a single vertex $a$.
    Also, consider the components of $T_i$ after the removal of all vertices in $T_{i-1}$ and contract each such component to a single vertex.
    The resulting graph $H$ is a minor of $G_i$ with two vertices in $H-a$ being adjacent if and only if some non-tree edge in $B_i$ connects the corresponding subtrees of $T_i - V(T_{i-1})$.
    
    Since $a$ is a universal vertex in $H$, the graph $H - a$ has no $K_{t-1}$ minor.
    Thus, as argued above the theorem there exists a universal constant $\ell = \ell(t-1)$ such that $E(H-a)$ can be decomposed into $\ell$ star forests $F_1,\ldots,F_\ell$.
    (We may assume that each vertex of $H-a$ is the center of at least one star.)
    For each star $S$ in $F_1,\ldots,F_\ell$, consider its center $v$ in $H-a$, the corresponding component $K$ of $T_i - V(T_{i-1})$, and the edge $t_0 \in T_i$ that connects $K$ and $T_{i-1}$.
    Observe that $C^*(T,f_0) \setminus \{t_0\} \subseteq B_i$.
    Let $B(S) \subseteq B_i$ be the subset of non-tree edges that correspond in $H$ to edges in the star $S$ or the edge between $a$ and $v$.
    By \cref{lem:one-fundCut} there is a $9$-coloring $\varphi_S$ of $B(S)$ such that for every $f \in T$ the set $C^*(T,f) \cap (C^*(T,f_0) \setminus \{f_0\}) \cap B(S)$ is either empty or contains a color an odd number of times.

    To color $B_i$, we use a new palette of $9$ colors for each of the $\ell$ star forests $F_1,\ldots,F_\ell$, while reusing the same set $C_x$ of $9$ colors for all stars in the same star forest $F_x$.
    In fact, if $S_1$ and $S_2$ are two stars from the same forest $F_x$, then for every tree-edge $f \in A_i$ we have that $C^*(T,f)$ is either disjoint from $B(S_1)$ or $B(S_2)$ (or both).
    Hence, for every $f \in A_i$ we have that $C^*(T,f) \cap \left( \bigcup_{S \in F_x} B(S) \right)$ is either empty or contains a color in $C_x$ an odd number of times.
    
    Finally, we use one set of $\ell$ palettes of $9$ colors to color $B_i$ for even $i$, and a second set of $\ell$ palettes of $9$ colors to color $B_i$ for odd $i$.
    By \cref{claim:tree-nontree}, this results in a coloring $\phi$ with $2\cdot9\cdot \ell$ colors that is odd for $\{C^*(T,f) \colon f \in T\}$. {Using further, that by \cref{lem:one-fundCut} in none of the palettes the color $9$ is used as the odd color in a neighborhood, we can merge all these, yielding a total of $2\cdot8\cdot \ell+1$ colors.}
\end{proof}

We now use \cref{thm:fundCuts} to prove \cref{cor:coloringFundPlanar}, which is restated below. 

\colBipCircle*
\begin{proof}
    De Fraysseix~\cite{deFraysseix81} proved that every connected bipartite circle graph is a fundamental graph of a planar graph $G$ and a spanning tree $T$.
    So $G$ excludes $K_5$ as a minor, and the graphs excluding $K_4$ as a minor are exactly the graphs of treewidth at most $2$. Hence, by~\cite{DOSV98} they have star-arboricity at most $3$.
    Thus, we can use \cref{thm:fundCuts} to color $E(G)\setminus E(T)$ with $16 \cdot 3 + 1 = 49$ colors so that for each non-bridge edge $f \in E(T)$, there exists a color $i \in \{1,2,\ldots,49\}$ which occurs an odd number of times on the fundamental cut of $f$.
    % \MH{Our \cref{thm:fundCuts} does not give a precise bound. If we want to use it to obtain precise bounds, shouldn't we adjust the statement of it?}
    % In fact, the contracted graph $H-a$ in the proof of \cref{thm:fundCuts} is outerplanar and hence can be decomposed into $\ell = 3$ star forests~\cite{HMS96}.

    Now, if $G^*$ denotes the planar dual graph of $G$, the we have $M(G^*)\cong M(G)^*$, see e.g., \cite[Chapter 6.3, Corollary 1]{Wel76}.
    Thus, by \cref{lem:dualFundGraphs} the fundamental graph of $G$ and $T$ is isomorphic to the fundamental of graph $G^*$ with spanning forest $T^*=E(G)\setminus E(T)$.
    Applying \cref{thm:fundCuts} to this graph, we can use a second set of size $49$ to color $E(G^*)\setminus E(T^*)=E(T)$ such that for each non-bridge edge $f \in E(T^*)=E(G)\setminus E(T)$, there exists a color $i \in \{50,51,\ldots, \maxCircle\}$ which occurs an odd number of times on the fundamental cut of $f$ in $G^*$ with respect to $T^*$.
% We have colored both sided of the bipartite circle graph such that every non-isolated vertex has some color an odd number of times in its neighborhood. 
\end{proof}
%%%% OLD VERSION %%%%
% \begin{proof}
% De Fraysseix~\cite{deFraysseix81} proved that every connected bipartite circle graph is a fundamental graph of a planar graph $G$ and a spanning tree $T$. By \cref{thm:fundCuts} we can $25$-color $E(G)\setminus E(T)$ so that for each non-bridge edge $e \in E(T)$, there exists a color $i \in \{1,2,\ldots, 25\}$ which occurs an odd number of times on the fundamental cut of $e$. Now, by planar duality $M(G^*)\cong M(G)^*$, see e.g., \cite[Chapter 6.3, Corollary 1]{Wel76}. Thus, by \cref{lem:dualFundGraphs} the fundamental graph of $G$ and $T$ is isomorphic to the fundamental of graph $G^*$ with spanning forest $T^*=E(G)\setminus E(T)$. Applying to this graph \cref{thm:fundCuts}, we can use a second set of size $25$ to color $E(G^*)\setminus E(T^*)=E(T)$ for each non-bridge edge $e \in E(T^*)=E(G)\setminus E(T)$, there exists a color $i \in \{26,27,\ldots, \maxCircle\}$ which occurs an odd number of times on the fundamental cut of $e$ in $G^*$ with respect to $T^*$.
% We have colored both sided of the bipartite circle graph such that every non-isolated vertex has some color an odd number of times in its neighborhood. 
% \end{proof}

\propImpro*
\begin{proof}
Let $G_n=(V,E)$ be the circle graph given by vertex set $V=\{(a,b)\mid 1\leq a<b\leq n\}$ and edge set $E=\{\{(a,b),(c,d)\}\mid a<c<b<d \text{ cyclically}\}$, see \cref{fig:G6} for an example.

\begin{figure}[ht]
    \centering
    \includegraphics[width=0.3\linewidth]{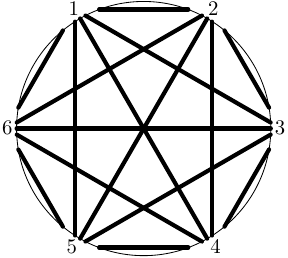}
    \caption{A representation of $G_6$ as circle graph.}
    \label{fig:G6}
\end{figure}

Suppose now that $f:V\to [k]$ is an (improper) odd $k$-coloring and denote by $o_f:V\to 2^{[k]}\setminus\{\emptyset\}$ the function that to every vertex of $G_n$ assigns the (non-empty) set of colors that appear an odd number of times in the neighborhood of the vertex. If $n\geq 2^k$ then there are vertices  $(a,b), (a,c)$ with $o_f(a,b)=o_f(a,c)$. But now, observe that $N(b,c)=N(a,b)\Delta N(a,c)$ and hence $o_f(b,c)=o_f(a,b)\Delta o_f(a,c)=\emptyset$, which is a contradiction.
\end{proof}

We have shown that bipartite circle graphs have odd chromatic number at most \maxCircle. However, the best lower bound we know of is $4$, attained by the cycle of length $4$. It would be interesting to obtain tighter bounds.

\begin{problem}
\label{prob:circle}
Does every bipartite circle graph have odd chromatic number at most $4$? 
\end{problem}

We think that \cref{thm:fundCuts} is interesting in its own right and in fact we are not aware of any family of graphs that forces an unbounded number of colors when coloring non-tree edges as in the theorem. In other words, we wonder if the following holds.

\begin{problem}
\label{prob:primal}
Does there exist a constant $k$ so that if $T$ is a spanning forest of a graph $G$, then there exists a coloring $\phi:E(G)\setminus E(T) \rightarrow \{1,\ldots,k\}$ so that for each non-bridge $f \in E(T)$, there exists a color $i \in \{1,\ldots,k\}$ which occurs an odd number of times on the fundamental cut of $T$ with respect to $f$?
\end{problem}

\section*{Acknowledgments}
This work was completed at the 12th Annual Workshop on Geometry and Graphs held at the Bellairs Research Institute in February 2025. We are grateful to the organizers and participants for providing a excellent research environment. We would also like to thank Szymon Toru\'{n}czyk for helpful comments and Juan Pablo Peña for discussions on circle graphs.

%\RMM{Ug, references that I added from my older bib files have first names abbreviated.}\KK{cahnged style so that all are abreviated}

\bibliographystyle{abbrv}
\bibliography{matroids}

\end{document}